\newtheorem{theorem}{Theorem}
\newtheorem{lemma}[theorem]{Lemma}
\newtheorem{proposition}[theorem]{Proposition}
\newtheorem{corollary}[theorem]{Corollary}
\theoremstyle{definition}
\theoremstyle{remark}
\newtheorem{remark}[theorem]{Remark}
\numberwithin{equation}{section}
\numberwithin{theorem}{section}
\newcommand\leref{Lemma \ref}
\newcommand\prref{Proposition \ref}
\newcommand\coref{Corollary \ref}
\def\CC{\mathbb{C}}
\def\QQ{\mathbb{Q}}
\def\ZZ{\mathbb{Z}}
\DeclareMathOperator\End{End}
\DeclareMathOperator\Hom{Hom}
\DeclareMathOperator\Ind{Ind}
\def\vac{{\boldsymbol{1}}}  
\def\ii{\mathrm{i}} 
\def\al{\alpha}
\def\be{\beta}
\def\ga{\gamma}
\def\ep{\varepsilon}
\def\la{\lambda}
\def\si{\sigma}
\def\Om{\Omega}
\def\d{\partial}
\def\<{\left\langle}
\def\>{\right\rangle}
\def\lieh{{\mathfrak{h}}}
\def\F{\mathcal{F}}
\def\W{\mathcal{W}}
\begin{document}
\title[Orbifolds of lattice vertex algebras]{Orbifolds of lattice vertex algebras under an isometry of order two}
\author{Bojko Bakalov}
\address{Department of Mathematics\\
North Carolina State University\\
Raleigh, NC 27695, USA}
\email{bojko\_bakalov@ncsu.edu}

\author{Jason Elsinger}
\address{Department of Mathematics\\
Spring Hill College\\
Mobile, AL 36695, USA}
\email{jelsinger@shc.edu}

\thanks{The first author was supported in part by a Simons Foundation grant}

\date{February 17, 2015; Revised July 17, 2015}

\subjclass[2010]{Primary 17B69; Secondary 81R10}

\begin{abstract}
Every isometry $\sigma$ of a positive-definite even lattice $Q$ can be lifted to an automorphism of 
the lattice vertex algebra $V_Q$. An important problem in vertex algebra theory and conformal field theory is to classify the
representations of the $\sigma$-invariant subalgebra $V_Q^\sigma$ of $V_Q$,
known as an orbifold. In the case when $\sigma$ is an isometry of $Q$ of order two, we classify the irreducible modules of the
orbifold vertex algebra $V_Q^\sigma$ and identify them as submodules of twisted or untwisted $V_Q$-modules. 
The examples where $Q$ is a root lattice and $\sigma$ is a Dynkin diagram automorphism are presented in detail.
\end{abstract}

\maketitle


\section{Introduction}\label{sintro}
The notion of a vertex algebra introduced by Borcherds \cite{B} provides a rigorous algebraic description of two-dimensional 
chiral conformal field theory (see e.g.\ \cite{BPZ, Go, DMS}), and is a powerful tool for studying representations of infinite-dimensional Lie algebras.
The theory of vertex algebras has been developed in \cite{FLM, K2, FB, LL, KRR} among other works. 
One of the spectacular early applications of vertex algebras was Borcherds' proof of the moonshine conjectures about the Monster group (see \cite{B2,Ga}),
which used essentially the Frenkel--Lepowsky--Meurman construction of a vertex algebra with a natural action of the Monster on it \cite{FLM}.

The FLM vertex algebra was constructed in three steps (see \cite{FLM,Ga}). First, one constructs a vertex algebra $V_Q$ from any even lattice $Q$ (see \cite{B,FLM})
by generalizing the Frenkel--Kac realization of affine Kac--Moody algebras in terms of vertex operators \cite{FK}. 
Second, given an isometry $\si$ of $Q$, one lifts it to an automorphism of $V_Q$ and constructs the so-called $\si$-twisted $V_Q$-modules \cite{D2,FFR}
that axiomatize the properties of twisted vertex operators \cite{KP,Le,FLM1}. Then the FLM vertex algebra is defined as the direct sum
of the $\si$-invariant part $V_Q^\si$ and one of the $\si$-twisted $V_Q$-modules, in the special case when $\si=-1$ and $Q$ is the Leech lattice.

In general, if $\si$ is an automorphism of a vertex algebra $V$, the $\si$-invariants $V^\si$ form a subalgebra of $V$ known as an orbifold, which
is important in conformal field theory (see e.g.\ \cite{DVVV, KT, DLM2} among many other works). Every $\si$-twisted representation of $V$ becomes untwisted when
restricted to $V^\si$. It is a long-standing conjecture that all irreducible $V^\si$-modules are obtained by restriction from twisted or untwisted $V$-modules. 
Under certain assumptions, this conjecture has been proved recently by M.~Miyamoto \cite{M1,M2,M3}, and he
has also shown that the vertex algebra $V^\si$ is rational, i.e., every (admissible) module is a direct sum of irreducible ones.

In this paper, we are concerned with the case when $Q$ is a positive-definite even lattice and $\si$ is an isometry of $Q$ of order two. We classify and construct
explicitly all irreducible modules of the orbifold vertex algebra $V_Q^\si$, and we realize them as submodules of twisted or untwisted $V_Q$-modules. In the important
special case when $\si=-1$, the classification was done previously by Dong--Nagatomo and Abe--Dong \cite{DN,AD}.
Our approach is to restrict from $V_Q^\si$ to $V_L^\si$ where $L$ is the sublattice of $Q$ spanned by eigenvectors of $\si$. The subalgebra $V_L^\si$
factors as a tensor product $V_{L_+}\otimes V_{L_-}^\si$, where $L_\pm$ consists of $\al\in Q$ with $\si\al=\pm\al$, respectively.
By the results of \cite{DLM3,ABD,DJL}, every $V_L^\si$-module is a direct sum of irreducible ones. We describe explicitly the irreducible $V_L^\si$-modules using
\cite{FHL,DN,AD}, and then utilize the intertwining operators among them \cite{A1,ADL} to determine the irreducible $V_Q^\si$-modules.
A new ingredient is the introduction of a sublattice $\bar{Q}$ of $Q$ such that $V_Q^\si = V_{\bar Q}^\si$ and $\si$
has order $2$ as an automorphism of $V_{\bar Q}$ (while in general $\si$ has order $4$ on $V_Q$).

Our work was motivated by the examples when $Q$ is the root lattice of a simply-laced simple Lie algebra and $\si$ is a Dynkin diagram automorphism,
which are related to twisted affine Kac--Moody algebras \cite{K1}. We also consider the case when $\si$ is the negative of a Dynkin diagram automorphism,
which is relevant to the Slodowy generalized intersection matrix Lie algebras \cite{S,Be}. 

The paper is organized as follows. In Section \ref{sva}, we briefly review lattice vertex algebras and their twisted modules, and the results for $\si=-1$ that we need.
Our main results are presented in Section \ref{sirr}. The examples when $Q$ is a root lattice of type ADE are discussed in detail in Section \ref{sroot}.

\section{Vertex algebras and their twisted modules}\label{sva}

In this section, we briefly review lattice vertex algebras and their twisted modules.
We also recall the case when $\si=-1$, which will be used essentially for the general case.
Good general references on vertex algebras are \cite{FLM, K2, FB, LL, KRR}. 

\subsection{Vertex algebras}\label{vert}

Recall that a \emph{vertex algebra} 
is a vector space $V$ with a distinguished vector $\vac\in V$ 
(vacuum vector), together with a linear map 
(state-field correspondence)
\begin{equation}\label{vert2}
Y(\cdot,z)\cdot \colon V \otimes V \to V(\!(z)\!) = V[[z]][z^{-1}] \,.
\end{equation}
Thus, for every $a\in V$, we have the \emph{field}
$Y(a,z) \colon V \to V(\!(z)\!)$. This field can be viewed as
a formal power series from $(\End V)[[z,z^{-1}]]$, which 
involves only finitely many negative powers of $z$ when
applied to any vector.

The coefficients in front of powers of $z$ in this expansion are known as the
\emph{modes} of $a$:
\begin{equation}\label{vert4}
Y(a,z) = \sum_{n\in\ZZ} a_{(n)} \, z^{-n-1} \,, \qquad
a_{(n)} \in \End V \,.
\end{equation}
The vacuum vector $\vac$ plays the role of an identity in the sense that
\begin{equation*}
a_{(-1)}\vac = \vac_{(-1)} a = a \,, \qquad 
a_{(n)}\vac = 0 \,, \quad n\geq 0 \,.
\end{equation*}
In particular, $Y(a,z)\vac \in V[[z]]$ is regular at $z=0$, and
its value at $z=0$ is equal to $a$.

The main axiom for a vertex algebra is the \emph{Borcherds identity}
(also called Jacobi identity \cite{FLM})
satisfied by the modes:
\begin{equation}\label{vert5}
\begin{split}
\sum_{j=0}^\infty & \binom{m}{j} (a_{(n+j)}b)_{(k+m-j)}c
= \sum_{j=0}^\infty \binom{n}{j} (-1)^j 
a_{(m+n-j)}(b_{(k+j)}c)
\\
&- \sum_{j=0}^\infty \binom{n}{j} (-1)^{j+n} \, b_{(k+n-j)}(a_{(m+j)}c) \,,
\end{split}
\end{equation}
where $a,b,c \in V$. 
Note that the above sums are finite, because
$a_{(n)}b = 0$ for sufficiently large~$n$.

We say that a vertex algebra $V$ is (strongly) \emph{generated} by a subset $S\subset V$ if $V$
is linearly spanned by the vacuum $\vac$ and all elements of the form
${a_1}_{(n_1)} \cdots {a_k}_{(n_k)} \vac$, where $k\geq1$, $a_i \in S$, $n_i<0$.

\subsection{Twisted representations of vertex algebras}\label{twrep}

A \emph{representation} of a vertex algebra $V$, or a $V$-\emph{module}, is a vector space $M$ endowed with a
linear map $Y(\cdot,z)\cdot \colon V \otimes M \to M(\!(z)\!)$
(cf.\ \eqref{vert2}, \eqref{vert4}) such that the Borcherds identity
\eqref{vert5} holds for $a,b\in V$, $c\in M$ (see \cite{FB, LL, KRR}).

Now let $\si$ be an automorphism of $V$ of finite order $r$. Then $\si$ is diagonalizable.
In the definition of a \emph{$\si$-twisted representation} $M$ of $V$ \cite{FFR, D2}, the image of the
above map $Y$ is allowed to have nonintegral rational powers of $z$,
so that
\begin{equation*}
Y(a,z) = \sum_{n\in p+\ZZ} a_{(n)} \, z^{-n-1} \,, \qquad
\text{if} \quad \si a = e^{-2\pi\ii p} a \,, \; p\in\frac1r\ZZ \,,
\end{equation*}
where $a_{(n)} \in \End M$.
The Borcherds identity \eqref{vert5} satisfied by the modes remains the
same in the twisted case, provided that $a$ is an eigenvector of $\si$.
An important consequence of the Borcherds identity is the
\emph{locality} property \cite{DL1,Li}:
\begin{equation}\label{locpr}
(z-w)^{N} [Y(a,z), Y(b,w)] = 0 
\end{equation}
for sufficiently large $N$ depending on $a,b$ (one can take $N$ to be such that
$a_{(n)}b=0$ for $n\ge N$).

The following result provides a rigorous interpretation of the \emph{operator product expansion} in conformal field theory
(cf.\ \cite{Go, DMS}) in the case of twisted modules.

\begin{proposition}[\cite{BM}]\label{pnprod}
Let\/ $V$ be a vertex algebra, $\si$ an automorphism of\/ $V$, and\/ $M$ a $\si$-twisted representation of\/ $V$.
Then
\begin{equation}\label{locpr3}
\frac1{k!} \d_{z}^k \Bigl( (z-w)^{N} \, Y(a,z) Y(b,w) c \Bigr)\Big|_{z=w}
= Y(a_{(N-1-k)} b, w) c
\end{equation}
for all\/ $a,b\in V$, $c\in M$, $k\geq0$, and sufficiently large\/ $N$.
Conversely, \eqref{locpr} and \eqref{locpr3} imply the Borcherds identity \eqref{vert5}.
\end{proposition}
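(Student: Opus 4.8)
The plan is to read \eqref{locpr3} as the coefficientwise form of the operator product expansion and to treat the two implications separately. For the forward direction, I first observe that summing the proposed identities against $(z-w)^k$ over $k\geq 0$ turns \eqref{locpr3} into the single \emph{associativity} relation
\begin{equation*}
Y(a,z)Y(b,w)c=\io_{z,w}\sum_{m\in\ZZ}(z-w)^{-m-1}\,Y(a_{(m)}b,w)c \,,
\end{equation*}
where $\io_{z,w}$ denotes expansion in the region $|z|>|w|$. Because $a_{(m)}b=0$ for $m\geq N$ once $N$ is at least the locality bound from \eqref{locpr}, only finitely many negative powers of $z-w$ survive on the right; multiplying by $(z-w)^N$ therefore produces a genuine Taylor series in $z-w$ with coefficients in $M$, and applying $\frac1{k!}\d_z^k$ and setting $z=w$ recovers \eqref{locpr3} exactly. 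Hence it suffices to derive the associativity relation from the Borcherds identity \eqref{vert5}.

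To do this I would pass from the mode identity \eqref{vert5} to its generating-series form by folding modes into fields: multiplying by $w^{-k-1}$ and summing over $k$ assembles the $c$-modes into $Y(b,w)c$ and $Y(a_{(m)}b,w)c$, and summing the $a$-mode index against powers of $z$ assembles $Y(a,z)$; the associativity relation is then read off as the equality of the coefficients of the powers of $z-w$. The main obstacle is the twisted setting, where $Y(a,z)$ carries fractional powers $z^{-n-1}$ with $n\in p+\ZZ$, so that the substitution $z=w$ and the derivatives $\d_z^k\big|_{z=w}$ are a priori undefined. The crux is that $a$ is a $\si$-eigenvector, so that $z^{p}Y(a,z)$ involves only integer powers of $z$; expanding the scalar factor $z^{p}=w^{p}\bigl(1+(z-w)/w\bigr)^{p}$ as a power series in $z-w$ converts $(z-w)^N Y(a,z)Y(b,w)c$ into a bona fide Taylor series in $z-w$ on which evaluation at $z=w$ is legitimate. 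Carrying these fractional exponents consistently through every formal-variable manipulation is the delicate point; in the untwisted case the argument reduces to the standard textbook equivalence.

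For the converse I assume locality \eqref{locpr} and the operator product expansion \eqref{locpr3}. Locality gives $(z-w)^N[Y(a,z),Y(b,w)]c=0$, so the commutator is a finite sum $\sum_{j\geq 0}\gamma_j(w)\frac1{j!}\d_w^j\delta(z-w)$ of derivatives of the (appropriately twisted) formal delta function with field-valued coefficients $\gamma_j$. Computing the singular parts of $Y(a,z)Y(b,w)c$ and of $Y(b,w)Y(a,z)c$ from \eqref{locpr3} identifies $\gamma_j(w)c=Y(a_{(j)}b,w)c$, i.e.\ the commutator formula. I would then recover the full Borcherds identity \eqref{vert5} by the standard residue argument: apply $\res_z$ to the two ordered products weighted by the monomials $z^{m}\io_{z,w}(z-w)^{n}$ and $z^{m}\io_{w,z}(z-w)^{n}$, and use
\begin{equation*}
\io_{z,w}(z-w)^{-1}-\io_{w,z}(z-w)^{-1}=\delta(z-w)
\end{equation*}
together with its $w$-derivatives; expanding $z^{m}=\bigl((z-w)+w\bigr)^{m}$ and collecting residues reproduces precisely the three binomial sums of \eqref{vert5}. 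As in the forward direction, the only genuinely new point beyond the untwisted theory is the bookkeeping of the fractional powers in the delta function and in $\io_{z,w}(z-w)^{n}$, which I expect to be the main obstacle.
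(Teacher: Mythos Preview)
The paper does not supply a proof of this proposition; it is quoted from \cite{BM} and used as a black box, so there is no in-paper argument to compare against. Your sketch is essentially the standard argument and is the one carried out in \cite{BM}: for the forward direction, assemble the modes in \eqref{vert5} into generating series to obtain associativity and then read off \eqref{locpr3} as the Taylor coefficients of $(z-w)^N Y(a,z)Y(b,w)c$; for the converse, use locality \eqref{locpr} to write the commutator as a finite sum of derivatives of the formal delta function, identify the coefficients from \eqref{locpr3}, and recover \eqref{vert5} by the usual residue computation with $\io_{z,w}(z-w)^n-\io_{w,z}(z-w)^n$.

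One small remark: your displayed ``associativity relation'' is literally the untwisted statement; as written the two sides live in different spaces of formal series once $a$ has nontrivial monodromy, since the left side carries powers of $z$ in $p+\ZZ$ while the right side has only integral powers of $z$. You already diagnose and fix this in the next paragraph by factoring out $z^{p}$ and expanding $z^{p}=w^{p}\bigl(1+(z-w)/w\bigr)^{p}$, which is exactly the device used in \cite{BM} to make sense of $\d_z^k(\,\cdot\,)|_{z=w}$ in the twisted case. With that understood, there is no gap in your outline.
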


Recall from \cite{FHL} that if $V_1$ and $V_2$ are vertex algebras, their tensor product is again a vertex algebra with 
\begin{equation*}
Y(v_1\otimes v_2,z) = Y(v_1,z) \otimes Y(v_2,z) \,, \qquad v_i \in V_i \,.
\end{equation*}
Furthermore, if $M_i$ is a $V_i$-module, then the above formula defines the structure of a $(V_1\otimes V_2)$-module
on $M_1\otimes M_2$ (see \cite{FHL}). This is also true for twisted modules. 

\begin{lemma}\label{ltensprod1}
For\/ $i=1,2$, let\/ $V_i$ be a vertex algebra, $\si_i$ an automorphism of\/ $V_i$, and\/ $M_i$ a $\si_i$-twisted representation of\/ $V_i$.
Then\/ $M_1\otimes M_2$ is a\/ $(\si_1\otimes \si_2)$-twisted module over\/ $V_1\otimes V_2$.
\end{lemma}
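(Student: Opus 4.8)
The plan is to avoid verifying the Borcherds identity \eqref{vert5} on $M_1\otimes M_2$ head-on, and instead use \prref{pnprod}: I would establish the locality property \eqref{locpr} together with the operator product formula \eqref{locpr3} on $M_1\otimes M_2$, and then invoke the converse direction of \prref{pnprod} to recover \eqref{vert5}. Since each $\si_i$ has finite order and is diagonalizable, so is $\si_1\otimes\si_2$, and its order divides the least common multiple of the orders of $\si_1$ and $\si_2$; hence the notion of a $(\si_1\otimes\si_2)$-twisted module applies. The eigenvectors of $\si_1\otimes\si_2$ are spanned by tensors $a_1\otimes a_2$ with $\si_i a_i = e^{-2\pi\ii p_i}a_i$, so by bilinearity of the state-field correspondence it suffices to work with such eigenvectors throughout. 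For these one has $(\si_1\otimes\si_2)(a_1\otimes a_2) = e^{-2\pi\ii(p_1+p_2)}(a_1\otimes a_2)$, and since the modes of $Y(a_i,z)$ lie in $p_i+\ZZ$, the product $Y(a_1\otimes a_2,z) = Y(a_1,z)\otimes Y(a_2,z)$ has modes in $(p_1+p_2)+\ZZ$ with exponents of $z$ bounded below on any vector; this is exactly the fractional-power expansion required of a twisted field. Comparing coefficients also yields the mode formula $(a_1\otimes a_2)_{(n)} = \sum_{i+j=n-1}(a_1)_{(i)}\otimes(a_2)_{(j)}$, which I would need below.

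For locality, write $A_i(z)=Y(a_i,z)$ and $B_i(w)=Y(b_i,w)$, so that on the tensor product $Y(a,z)Y(b,w) = (A_1(z)B_1(w))\otimes(A_2(z)B_2(w))$ and likewise in the opposite order. Choosing $N_i$ so that \eqref{locpr} holds on $M_i$ and setting $N=N_1+N_2$, I would split $(z-w)^N = (z-w)^{N_1}(z-w)^{N_2}$ and absorb one factor into each tensor slot; since $z$ and $w$ are central this factorization is legitimate. Factorwise locality then interchanges $A_iB_i$ with $B_iA_i$ in both slots simultaneously, giving $(z-w)^N[Y(a,z),Y(b,w)]=0$.

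For \eqref{locpr3}, the same splitting shows that $(z-w)^N Y(a,z)Y(b,w)(c_1\otimes c_2)$ is the tensor product of the two expressions $(z-w)^{N_i}A_i(z)B_i(w)c_i$. Applying $\tfrac1{k!}\d_z^k$ by the Leibniz rule and setting $z=w$, I would use \prref{pnprod} on each factor to rewrite the result as $\sum_{k_1+k_2=k} Y((a_1)_{(N_1-1-k_1)}b_1,w)c_1\otimes Y((a_2)_{(N_2-1-k_2)}b_2,w)c_2$. The reindexing $i=N_1-1-k_1$, $j=N_2-1-k_2$ gives $i+j=N-2-k$, and combined with the mode formula above this identifies the sum with $Y(a_{(N-1-k)}b,w)(c_1\otimes c_2)$, which is the desired right-hand side. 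The converse part of \prref{pnprod} then yields the Borcherds identity on $M_1\otimes M_2$, completing the proof.

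I expect the main obstacle to be the bookkeeping in this last step, rather than any conceptual difficulty: one must justify the Leibniz rule and the factorization of $(z-w)^N$ across the shared formal variables $z,w$ carefully, and verify that for $N_1,N_2$ chosen sufficiently large the range of the reindexed summation agrees with that appearing in $a_{(N-1-k)}b$, so that the two sums genuinely coincide.
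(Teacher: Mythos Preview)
Your proposal is correct and follows exactly the same route as the paper: the paper's proof also reduces to checking \eqref{locpr} and \eqref{locpr3} on pure tensors and then invokes the converse direction of \prref{pnprod}, dismissing the verification as ``a straightforward calculation.'' You have simply written out that calculation, including the correct bookkeeping on the summation ranges, so there is nothing to add.
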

\begin{proof}
By \prref{pnprod}, it is enough to check \eqref{locpr} and \eqref{locpr3} for $a=a_1\otimes a_2$ and $b=b_1\otimes b_2$, given that they hold
for the pairs $a_1,b_1\in V_1$ and $a_2,b_2\in V_2$. This is done by a straightforward calculation.
\end{proof}

\subsection{Lattice vertex algebras}\label{lat}

Let $Q$ be an \emph{integral lattice}, i.e., a free abelian group of finite rank equipped with a symmetric
nondegenerate bilinear form $(\cdot|\cdot) \colon Q\times Q\to\ZZ$. We will assume that $Q$ is \emph{even},
i.e., $|\al|^2=(\al|\al) \in2\ZZ$ for all $\al\in Q$.
We denote by   
$\lieh = \CC\otimes_\ZZ Q$ the corresponding complex 
vector space considered as an abelian Lie algebra, and extend the bilinear form to it.

The \emph{Heisenberg algebra}
$\hat\lieh = \lieh[t,t^{-1}] \oplus \CC K$
is the Lie algebra with brackets
\begin{equation}\label{heis1}
[a_m,b_n] = m \delta_{m,-n} (a|b) K \,, \qquad
a_m=at^m \,,
\end{equation}
where $K$ is central.
Its irreducible highest-weight representation 
\begin{equation*}
\F = \Ind^{\hat\lieh}_{\lieh[t]\oplus\CC K} \CC \cong S(\lieh[t^{-1}]t^{-1})
\end{equation*}
on which $K=1$ is known as the (bosonic) \emph{Fock space}.

Following \cite{FK, B}, we consider a
$2$-cocycle $\ep\colon Q \times Q \to \{\pm1\}$ 
such that
\begin{equation}\label{lat2}
\ep(\al,\al) = (-1)^{|\al|^2(|\al|^2+1)/2}  \,,
\qquad \al\in Q \,,
\end{equation}
and the associative algebra $\CC_\ep[Q]$ with basis
$\{ e^\al \}_{\al\in Q}$ and multiplication
\begin{equation}\label{lat1}
e^\al e^\be = \ep(\al,\be) e^{\al+\be} \,.
\end{equation}
Such a $2$-cocycle $\ep$ is unique up to equivalence and can be chosen
to be bimultiplicative. In addition,
\begin{equation}\label{lat22}
\ep(\al,\be) \ep(\be,\al) = (-1)^{(\al|\be)}  \,, 
\qquad \al,\be\in Q \,.
\end{equation}

The \emph{lattice vertex algebra} 
associated to $Q$ is defined as $V_Q=\F\otimes\CC_\ep[Q]$,
where the vacuum vector is $\vac=1\otimes e^0$.
We let the Heisenberg algebra act on $V_Q$ so that
\begin{equation*}
a_n e^\be = \delta_{n,0} (a|\be) e^\be \,, \quad n\geq0 \,, \qquad
a\in\lieh \,.
\end{equation*}
%
The state-field correspondence on $V_Q$ is uniquely determined by the
generating fields:
\begin{align}\label{lat4}
Y(a_{-1}\vac,z) &= \sum_{n\in\ZZ} a_n \, z^{-n-1} \,, \qquad a\in\lieh \,,
\\ \label{lat5}
Y(e^\al,z) &= e^\al z^{\al_0} 
\exp\Bigl( \sum_{n<0} \al_n \frac{z^{-n}}{-n} \Bigr) 
\exp\Bigl( \sum_{n>0} \al_n \frac{z^{-n}}{-n} \Bigr) \,,
\end{align}
where $z^{\al_0} e^\be = z^{(\al|\be)} e^\be$.

Notice that $\F\subset V_Q$ is a vertex subalgebra, which we call the \emph{Heisenberg vertex algebra}.
The map $\lieh\to\F$ given by $a\mapsto a_{-1}\vac$ is injective.
{}From now on, we will slightly abuse the notation and identify
$a\in\lieh$ with $a_{-1}\vac \in\F$; then $a_{(n)}=a_n$ for all $n\in\ZZ$. 


\subsection{Twisted Heisenberg algebra}\label{twheis}

Every automorphism $\si$ of $\lieh$ preserving the bilinear form induces automorphisms
of $\hat\lieh$ and $\F$, which will be denoted again as $\si$. As before, assume that $\si$
has a finite order $r$.
The action of $\si$ can be extended to $\lieh[t^{1/r},t^{-1/r}] \oplus\CC K$ by letting
\begin{equation*}
\si(a t^m) = \si(a) e^{2\pi\ii m}t^m \,, \quad \si(K)=K \,,
\qquad a\in\lieh \,, \; m\in\frac1r\ZZ \,.
\end{equation*}
The \emph{$\si$-twisted Heisenberg algebra} 
$\hat\lieh_\si$ is defined as the set of all $\si$-invariant elements
(see e.g.\ \cite{KP,Le,FLM1}).
In other words, $\hat\lieh_\si$ is spanned over $\CC$ by $K$ and
the elements $a_m = at^m$ such that $\si a = e^{-2\pi\ii m} a$.
This is a Lie algebra with bracket 
(cf.\ \eqref{heis1})
\begin{equation*}
[a_m,b_n] = m \delta_{m,-n} (a|b) K \,, \qquad a,b\in\lieh \,, \;\; m,n\in \frac1r\ZZ \,.
\end{equation*}
Let $\hat\lieh_\si^\ge$ (respectively, $\hat\lieh_\si^<$) be the abelian subalgebra of 
$\hat\lieh_\si$ spanned by all elements $a_m$ with $m\geq0$ 
(respectively, $m<0$). 

The \emph{$\si$-twisted Fock space} is defined as
\begin{equation}\label{twheis2}
\F_\si = \Ind^{\hat\lieh_\si}_{\hat\lieh_\si^\ge \oplus\CC K} \CC \cong S(\hat\lieh_\si^<) \,,
\end{equation}
where $\hat\lieh_\si^\ge$ acts on $\CC$ trivially and $K$ acts as the identity operator.
Then $\F_\si$ is an irreducible highest-weight representation of $\hat\lieh_\si$, and
has the structure of a $\si$-twisted representation of the vertex algebra $\F$
(see e.g.\ \cite{FLM,KRR}). This structure can be described as follows. 
We let $Y(\vac,z)$ be the identity operator and
\begin{equation*}
Y(a,z) = \sum_{n\in p+\ZZ} a_{n} \, z^{-n-1} \,, \qquad
a\in\lieh \,, \;\; \si a = e^{-2\pi\ii p} a \,, \;\;  p\in\frac1r\ZZ \,,
\end{equation*}
and we extend $Y$ to all $a\in\lieh$ by linearity.
The action of $Y$ on other elements of $\F$ is then determined by applying several times the product
formula \eqref{locpr3}.
More explicitly, $\F$ is spanned by elements of the form $a^1_{m_1}\cdots a^k_{m_k} \vac$
where $a^j\in\lieh$, and we have:
\begin{align*}
Y& ( a^1_{m_1}\cdots a^k_{m_k} \vac, z ) c
\\
&= \prod_{j=1}^k \d_{z_j}^{(N-1-m_j)} 
\Bigl( \prod_{j=1}^k (z_{j}-z)^N \; Y(a^1,z_1) \cdots Y(a^k,z_k) c \Bigr)\Big|_{z_1=\cdots=z_k=z}
\end{align*}
for all $c\in\F_\si$ and sufficiently large $N$.
In the above formula, we use the divided-power notation $\d^{(n)} = \d^n / n!$.

\subsection{Twisted representations of lattice vertex algebras}\label{twlat}

Let $\si$ be an automorphism (or \emph{isometry})
of the lattice $Q$ of finite order $r$, so that
\begin{equation}\label{twlat1a}
(\si\al | \si\be) = (\al | \be) \,, \qquad \al,\be\in Q \,.
\end{equation}
The uniqueness of the cocycle $\ep$ and \eqref{twlat1a}, \eqref{lat22} imply that
\begin{equation}\label{twlat3}
\eta(\al+\be) \ep(\si\al,\si\be) = \eta(\al)\eta(\be) \ep(\al,\be)
\end{equation}
for some function $\eta\colon Q\to\{\pm1\}$. 
\begin{lemma}\label{leta}
Let\/ $L$ be a sublattice of\/ $Q$ such that\/ $\ep(\si\al,\si\be) = \ep(\al,\be)$
for\/ $\al,\be\in L$. Then there exists a function $\eta\colon Q\to\{\pm1\}$
satisfying \eqref{twlat3} and\/ $\eta(\al)=1$ for all\/ $\al\in L$.
\end{lemma}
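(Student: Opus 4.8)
The plan is to rewrite \eqref{twlat3} as a coboundary equation for $\eta$ and then solve it by hand using a basis of $Q$ adapted to $L$. We may assume the cocycle $\ep$ is bimultiplicative, as noted after \eqref{lat1}. Put $f(\al,\be) := \ep(\si\al,\si\be)\,\ep(\al,\be) \in\{\pm1\}$. Since every value of $\ep$ squares to $1$, condition \eqref{twlat3} is equivalent to the requirement that $\eta(\al+\be)=\eta(\al)\eta(\be)f(\al,\be)$ for all $\al,\be\in Q$, and the hypothesis $\ep(\si\al,\si\be)=\ep(\al,\be)$ for $\al,\be\in L$ says exactly that $f\equiv 1$ on $L\times L$. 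I would first record that $f$ is bimultiplicative (because $\ep$ is and $\si$ is additive) and that $f(\al,\al)=\ep(\al,\al)^2=1$ for every $\al$, using \eqref{lat2} together with $|\si\al|^2=|\al|^2$ from \eqref{twlat1a}. Writing $f=(-1)^{B}$ with $B$ a $\ZZ/2\ZZ$-valued bilinear form, the identity $f(\al,\al)=1$ forces $B$ to be symmetric (expand $B(\al+\be,\al+\be)=0$), so $B$ is both symmetric and alternating.

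Next I would choose, via Smith normal form, a $\ZZ$-basis $v_1,\dots,v_n$ of $Q$ and positive integers $d_1,\dots,d_k$ such that $d_1v_1,\dots,d_kv_k$ is a $\ZZ$-basis of $L$. Define $\eta\bigl(\sum_i a_iv_i\bigr)=\prod_{i<j}f(v_i,v_j)^{a_ia_j}$, that is, $\eta=(-1)^q$ with $q\bigl(\sum_i a_iv_i\bigr)=\sum_{i<j}B(v_i,v_j)a_ia_j$. A direct expansion, using that $B$ is symmetric with zero diagonal, yields $q(\al+\be)=q(\al)+q(\be)+B(\al,\be)$, which is precisely the coboundary relation above; hence this $\eta$ satisfies \eqref{twlat3}. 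To see that $\eta|_L=1$, take $\al=\sum_{i\le k}m_id_iv_i\in L$: only indices $i,j\le k$ contribute, and each surviving factor equals $\bigl(f(v_i,v_j)^{d_id_j}\bigr)^{m_im_j}=f(d_iv_i,d_jv_j)^{m_im_j}=1$ because $d_iv_i,d_jv_j\in L$. Thus $\eta(\al)=1$.

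The hard part is this very last point, and it is the reason one cannot argue purely cohomologically. The naive approach---pick any solution $\eta_0$ of \eqref{twlat3} and multiply it by a character of $Q$ to kill it on $L$---requires extending the homomorphism $\eta_0|_L\colon L\to\{\pm1\}$ to a character of $Q$, and such an extension can genuinely fail when $Q/L$ has $2$-torsion, since a character of $Q$ must send $d_iv_i$ to $\eta(v_i)^{d_i}$, which is forced to be $1$ once $d_i$ is even. The explicit construction sidesteps this obstruction precisely because $f(\al,\al)=1$ removes the diagonal ``quadratic'' contributions, leaving only the off-diagonal values $f(v_i,v_j)$, each of which is trivialized on $L$ after the indices are scaled by $d_i$ and $d_j$.
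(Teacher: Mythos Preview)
Your proof is correct and follows the same overall strategy as the paper's: choose a Smith-adapted $\ZZ$-basis $v_1,\dots,v_n$ of $Q$ with $d_1v_1,\dots,d_kv_k$ a basis of $L$, prescribe $\eta(v_i)=1$, and exploit the identity $f(\al,\al)=1$ (equivalently $\ep(\si\al,\si\al)=\ep(\al,\al)$, which comes from \eqref{lat2} and \eqref{twlat1a}) to conclude $\eta|_L=1$. The only difference is packaging: where you write down the explicit quadratic formula $\eta=\prod_{i<j}f(v_i,v_j)^{a_ia_j}$ and then verify $\eta|_L=1$ by factoring $f(v_i,v_j)^{d_id_j}=f(d_iv_i,d_jv_j)=1$ via bimultiplicativity, the paper instead observes once and for all that any solution of \eqref{twlat3} satisfies $\eta(2\al+\be)=\eta(\be)$, so $\eta$ descends to $Q/2Q$; then $\eta(d_iv_i)$ equals $\eta(v_i)$ or $\eta(0)$ according to the parity of $d_i$, both of which are $1$, and one finishes since $\eta|_L$ is multiplicative because $f|_{L\times L}=1$.
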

\begin{proof}
First observe that, by \eqref{lat2} and \eqref{twlat1a}, \eqref{twlat3} for $\al=\be$, we have
$\eta(2\al)=1$ for all $\al\in Q$. Since, by bimultiplicativity, $\ep(2\al,\be)=1$, we obtain that
$\eta(2\al+\be)=\eta(\be)$ for all $\al,\be$. Therefore,
$\eta$ is defined on $Q/2Q$. If $\al_1,\dots,\al_\ell$ is any $\ZZ$-basis for $Q$, we can set
all $\eta(\al_i)=1$ and then $\eta$ is uniquely extended to the whole $Q$ by \eqref{twlat3}.
We can pick a $\ZZ$-basis for $Q$ so that $d_1\al_1,\dots,d_m\al_m$ is a $\ZZ$-basis for $L$,
where $m\le\ell$ and $d_i\in\ZZ$. Then the extension of $\eta$ to $Q$ will satisfy
$\eta(\al)=1$ for all $\al\in L$.
\end{proof}
In particular, $\eta$ can be chosen such that
\begin{equation}\label{twlat2}
\eta(\al)=1 \,,\qquad \al\in Q\cap\lieh_0 \,,
\end{equation}
where $\lieh_0$ denotes the subspace of $\lieh$ consisting of vectors fixed under $\si$.
Then $\si$ can be lifted to an automorphism of 
the lattice vertex algebra $V_Q$ by setting
\begin{equation}\label{twlat4}
\si(a_n)=\si(a)_n \,, \quad \si(e^\al)=\eta(\al) e^{\si\al} \,,
\qquad a\in\lieh \,, \; \al\in Q \,.
\end{equation}
\begin{remark}\label{rqbar}
The order of $\si$ is $r$ or $2r$ when viewed as an automorphism of $V_Q$, where $r$ is the order of $\si$ on $Q$.
The set $\bar Q$ of $\al\in Q$ such that $\si^r(e^\al)=e^\al$ is a sublattice of $Q$ of index $1$ or $2$, and we have
$(V_Q)^{\si^r} = V_{\bar Q}$.
\end{remark}

We will now recall the construction of irreducible $\si$-twisted $V_Q$-modules (see \cite{KP,Le,D2,BK}).
Introduce the group $G = \CC^\times \times \exp\lieh_0 \times Q$ consisting of elements $c \, e^h U_\al$ 
($c\in\CC^\times$, $h\in\lieh_0$, $\al\in Q$) with multiplication
\begin{align*}
e^h e^{h'} &= e^{h+h'} \,,
\\
e^h U_\al e^{-h} &= e^{(h|\al)} U_\al \,,
\\
U_\al U_\be &= \ep(\al,\be) B_{\al,\be}^{-1} \, U_{\al+\be} \,,
\end{align*}
where
\begin{equation*}
B_{\al,\be} = 
r^{ -(\al|\be) } \prod_{k=1}^{r-1} \bigl(1 - e^{2\pi\ii k/r} \bigr)^{ (\si^k\al|\be) } \,.
\end{equation*}
Let
\begin{equation*}
C_\al = \eta(\al) U_{\si\al}^{-1} U_\al e^{ 2\pi\ii(b_\al+\pi_0\al) } \,, \qquad
b_\al=\frac12 \bigl( |\pi_0\al|^2-|\al|^2 \bigr) \,,
\end{equation*}
where 
$\pi_0$ is the projection of $\lieh$ onto $\lieh_0$.
Then $C_\al C_\be = C_{\al+\be}$ and all $C_\al$ are in the center of $G$.
We define $G_\si$ to be the quotient group $G/\{C_\al\}_{\al\in Q}$.

Consider an irreducible representation $\Om$ of $G_\si$.
Such representations are parameterized by the set $(Q^*/Q)^\si$ of $\si$-invariants in $Q^*/Q$,
i.e., by $\la+Q$ such that $\la\in Q^*$ and $(1-\si)\la\in Q$ (see \cite[Proposition 4.4]{BK}).
Furthermore, the action of $\exp\lieh_0$ on $\Om$ is semisimple:
\begin{equation*}
\Om=\bigoplus_{\mu\in\pi_0(Q^*)} \Om_\mu \,,
\end{equation*}
where 
\begin{equation*}
\Om_\mu = \{ v\in\Om \,|\, e^h v = e^{(h|\mu)} v 
\;\;\text{for}\;\; h\in\lieh_0 \} \,. 
\end{equation*}

Then $\F_\si\otimes\Om$ is an irreducible $\si$-twisted $V_Q$-module with an action defined as follows.
We define $Y(a,z)$ for $a\in\lieh$ as before, 
and for $\al\in Q$ we let
\begin{equation}\label{twlat12}
Y(e^\al,z) = 
\exp\Biggl( \sum_{  n\in\frac1r\ZZ_{<0} } \al_n \frac{z^{-n}}{-n} \Biggr)
\exp\Biggl( \sum_{  n\in\frac1r\ZZ_{>0} } \al_n \frac{z^{-n}}{-n} \Biggr)
\otimes U_\al z^{b_\al+\pi_0\al} \,.
\end{equation}
Here the action of $z^{\pi_0\al}$ is given by $z^{\pi_0\al} v = z^{(\pi_0\al|\mu)} v$ for $v\in\Om_\mu$,
and $(\pi_0\al|\mu) \in \frac1r\ZZ$.
The action of $Y$ on all of $V_Q$ can be obtained by applying the product
formula \eqref{locpr3}.

By \cite[Theorem 4.2]{BK}, every irreducible $\si$-twisted $V_Q$-module is obtained in this way,
and every $\si$-twisted $V_Q$-module is a direct sum of irreducible ones.
In the special case when $\si=1$, we get Dong's Theorem that the irreducible $V_Q$-modules
are classified by $Q^*/Q$ (see \cite{D1}). Explicitly, they are given by:
\begin{equation*}
V_{\la+Q} = \F\otimes\CC_\ep[Q] e^\la\,, \qquad \la\in Q^* \,.
\end{equation*}

When the lattice $Q$ is written as an orthogonal direct sum of sublattices, $Q=L_1\oplus L_2$, we have a natural isomorphism
$V_Q \cong V_{L_1} \otimes V_{L_2}$. The following lemma shows that if $L_1$ and $L_2$ are $\si$-invariant, there is
a correspondence of irreducible twisted modules (cf.\ \cite{FHL} and \leref{ltensprod1}).

\begin{lemma}\label{ltensprod2}
Let\/ $Q$ be an even lattice, $Q=L_1\oplus L_2$ an orthogonal direct sum, and\/ $\si$ an automorphism of\/ $Q$ such that\/ $\si(L_i)\subseteq L_i \; (i=1,2)$.
Set\/ $\si_i=\si|_{L_i}$. Then every irreducible\/ $\si$-twisted\/ $V_Q$-module\/ $M$ is a tensor product, $M\cong M_1\otimes M_2$, where\/
$M_i$ is an irreducible\/ $\si_i$-twisted\/ $V_{L_i}$-module.
\end{lemma}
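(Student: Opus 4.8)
The plan is to reduce everything to the explicit construction $\F_\si\otimes\Om$ of irreducible twisted modules recalled above and to show that it respects the orthogonal splitting $Q=L_1\oplus L_2$. Under the isomorphism $V_Q\cong V_{L_1}\otimes V_{L_2}$ coming from this splitting, the generating vectors satisfy $e^\al=e^{\al_1}\otimes e^{\al_2}$ for $\al=\al_1+\al_2$ with $\al_i\in L_i$, and the cocycle $\ep$ may be chosen bimultiplicative and factoring as $\ep(\al,\be)=\ep(\al_1,\be_1)\ep(\al_2,\be_2)$ with trivial cross terms $\ep(\al_1,\be_2)=1$ for $\al_1\in L_1,\be_2\in L_2$ (consistent with \eqref{lat22}, since $(\al_1|\be_2)=0$). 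Choosing $\eta$ so that it factors as $\eta(\al)=\eta_1(\al_1)\eta_2(\al_2)$ on a $\ZZ$-basis adapted to the splitting, the lift \eqref{twlat4} of $\si$ to $V_Q$ becomes $\si_1\otimes\si_2$. Consequently, by \leref{ltensprod1}, whenever $M_i$ is an irreducible $\si_i$-twisted $V_{L_i}$-module, $M_1\otimes M_2$ is a $\si$-twisted $V_Q$-module; the first task is to see it is irreducible, and the second is to see that every irreducible $M$ arises this way.

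I would carry this out through the building blocks of the construction. First, the orthogonality $(\lieh^{(1)}|\lieh^{(2)})=0$, where $\lieh^{(i)}=\CC\otimes_\ZZ L_i$, shows that the twisted Heisenberg algebra decomposes as $\hat\lieh_\si=\hat\lieh_{\si_1}\oplus\hat\lieh_{\si_2}$ (a direct sum of Lie algebras sharing the central element $K$), because the bracket \eqref{heis1} pairs $\lieh^{(1)}$ and $\lieh^{(2)}$ trivially while the eigenvalue condition on $m$ respects each summand. Taking symmetric algebras in \eqref{twheis2} then yields $\F_\si\cong\F_{\si_1}\otimes\F_{\si_2}$ as $\hat\lieh_\si$-modules, compatibly with the twisted fields $Y(a,z)$ for $a\in\lieh$.

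The heart of the argument is the group $G_\si$. Since $\si$ preserves each $L_i$, the fixed space splits as $\lieh_0=\lieh_0^{(1)}\oplus\lieh_0^{(2)}$, so $\exp\lieh_0=\exp\lieh_0^{(1)}\times\exp\lieh_0^{(2)}$; likewise $Q=L_1\oplus L_2$. Using orthogonality, the factor $B_{\al,\be}$ is multiplicative across the splitting and equals $1$ when $\al\in L_1,\be\in L_2$ (each pairing $(\si^k\al|\be)$ vanishes, as $\si^k$ preserves $L_1$), and the same holds for $\ep$; hence the generators $U_{\al_1}$ and $U_{\al_2}$ commute and the central elements factor as $C_\al=C_{\al_1}C_{\al_2}$. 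This exhibits $G_\si$ as a central product of the groups $G_{\si_1}$ and $G_{\si_2}$ built from $L_1$ and $L_2$, the two factors commuting and agreeing on the shared central $\CC^\times$. The main obstacle is therefore purely representation-theoretic: one must show that an irreducible representation $\Om$ of such a central product factors as $\Om\cong\Om_1\otimes\Om_2$ with $\Om_i$ irreducible over $G_{\si_i}$. This is where I expect the real work to lie, and I would establish it either directly from the Stone--von Neumann description of these Heisenberg-type groups or, more cheaply, by the counting argument below.

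Finally I would assemble the pieces and verify exhaustiveness. Combining the two factorizations gives $M=\F_\si\otimes\Om\cong(\F_{\si_1}\otimes\Om_1)\otimes(\F_{\si_2}\otimes\Om_2)=M_1\otimes M_2$, and the explicit formula \eqref{twlat12} for $Y(e^\al,z)$ factors accordingly, so the $V_Q$-action is the tensor-product action of \leref{ltensprod1}; irreducibility of $M$ then forces each $M_i$ to be irreducible. For exhaustiveness, recall from \cite{BK} that irreducible $\si$-twisted $V_Q$-modules are parameterized by $(Q^*/Q)^\si$. Because $Q^*=L_1^*\oplus L_2^*$ and the $\si$-action respects this, one has $(Q^*/Q)^\si\cong(L_1^*/L_1)^{\si_1}\times(L_2^*/L_2)^{\si_2}$, so the number of irreducible $\si$-twisted $V_Q$-modules equals the product of the numbers of irreducible $\si_i$-twisted $V_{L_i}$-modules. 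Since the tensor products $M_1\otimes M_2$ are pairwise nonisomorphic irreducibles and there are exactly this many of them, they account for all irreducible $\si$-twisted $V_Q$-modules, completing the proof.
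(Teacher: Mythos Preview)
Your proposal is correct and follows essentially the same route as the paper's proof: both reduce to the classification $M\cong\F_\si\otimes\Om$ and factor the twisted Heisenberg algebra, the twisted Fock space, and the group $G$ across the orthogonal splitting. You are simply more thorough---the paper asserts in one line that ``the group $G$ is a direct product of its subgroups $G_1$ and $G_2$,'' whereas you correctly identify it as a central product over the shared $\CC^\times$ and supply the counting argument via $(Q^*/Q)^\si\cong(L_1^*/L_1)^{\si_1}\times(L_2^*/L_2)^{\si_2}$ to confirm that the tensor products exhaust all irreducibles.
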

\begin{proof}
This follows from the classification of irreducible $\si$-twisted $V_Q$-modules described above. Indeed, $Q=L_1\oplus L_2$ gives rise to a decomposition
of the Heisenberg Lie algebra as a direct sum $\hat\lieh=\hat\lieh_1\oplus\hat\lieh_2$. We also have a similar decomposition for the corresponding
twisted Heisenberg algebras. Then for the twisted Fock spaces, we get
$\F_\si=\F_{\si_1}\otimes \F_{\si_2}$. Similarly, the group $G$ is a direct product of its subgroups $G_1$ and $G_2$ associated to the lattices $L_1$ and $L_2$,
respectively.
\end{proof}

\subsection{The case $\si=-1$}\label{tcasesi-1}

Now we will review what is known in the case when $\si=-1$, which will be used essentially in our treatment of the
general case. In this subsection, we will denote the even integral lattice by $L$ instead of $Q$. As before, let 
$\lieh=\CC\otimes_\ZZ L$ be the corresponding complex vector space.

Observe that $\lieh_0=0$, $\pi_0=0$, and we can assume that $\eta(\al)=1$
for all $\al\in L$. Hence, the automorphism $\si$ acts on $V_L$ by
\begin{equation}\label{siact1}
\si\left(h^1_{(-n_1)}\cdots h^k_{(-n_k)}e^\al\right)=(-1)^kh^1_{(-n_1)}\cdots h^k_{(-n_k)}e^{-\al}
\end{equation}
for $h^i\in\lieh$, $n_i\in\ZZ_{\geq 0}$ and $\al\in L$.
The group $G$ consists of $cU_\al$ $(c\in\CC^\times,\al\in L)$, and its center consists of $cU_\al$ with $\al\in 2L^*\cap L$.
Then $G_\si$ is the quotient of $G$ by $\{U_\al^{-1}U_{-\al}\}_{\al\in L}$.
The twisted Heisenberg algebra is $\hat\lieh_\si = \lieh[t,t^{-1}] t^{1/2} \oplus\CC K$.

For any $G_\si$-module $T$, define $V_L^T=\F_\si\otimes T$, where $\F_\si$ is the twisted Fock space (cf.\ \eqref{twheis2}).
By \cite{D2}, every irreducible $\si$-twisted $V_L$-module is isomorphic to $V_L^T$ for some irreducible $G_\si$-module $T$
on which $cU_0$ acts as $c I$, where $I$ is the identity operator. Such modules $T$ can be described equivalently as $G$-modules, 
on which $cU_0=cI$ and $U_\al=U_{-\al}$. The irreducible ones are determined by the central characters $\chi$ such that
$\chi(U_\al)=\chi(U_{-\al})$ for $\al\in 2L^*\cap L$.
We have:
\[
U_\al^2 = U_\al U_{-\al} = \ep(\al,-\al) B_{\al,-\al}^{-1} U_0 \,,
\]
which implies that
\begin{equation}\label{chiual}
\chi(U_\al) = s(\al) e^{\pi\ii |\al|^2 (|\al|^2+1)/2} 2^{-|\al|^2} \,,
\end{equation}
where $s(\al) \in\{\pm1\}$ satisfies 
$s(\al+\be)=s(\al)s(\be)$. All such maps $s$
have the form 
\[
s(\al)=(-1)^{(\al|\mu)} 
\]
for some $\mu\in (2L^*\cap L)^*$. The corresponding central characters $\chi$ will be denoted as $\chi_\mu$,
and the corresponding $G_\si$-module $T$ as $T_\mu$.

We define an action of $\si$ on $V_L^T$ by
\begin{equation}\label{siact2}
\si\left(h^1_{(-n_1)}\cdots h^k_{(-n_k)}t\right)=(-1)^kh^1_{(-n_1)}\cdots h^k_{(-n_k)}t
\end{equation}
for $h^i\in\lieh$, $n_i\in\frac{1}{2}+\ZZ_{\geq 0}$ and $t\in T$. The eigenspaces for $\si$ are denoted $V_L^{T,\pm}$. 
Then we have
\begin{equation}\label{siact3}
\si\bigl( Y(a,z) v \bigr) = Y(\si a,z) (\si v) \,, \qquad a\in V_L \,, \; v\in V_L^T \,,
\end{equation}
which implies that $\si$ is an automorphism of $V_L^\si$-modules. In particular, $V_L^{T,\pm}$
are $V_L^\si$-modules. Similarly, we define an action of $\si$ on the untwisted $V_L$-modules $V_{\la+L}$ by
\eqref{siact1} for $\al\in\la+L$.
Note that $\si V_{\la+L} \subseteq V_{-\la+L}$. Hence, if $\la\in L^*$, $2\la\in L$, then
the eigenspaces $V_{\la+L}^\pm$ are $V_L^\si$-modules. On the other hand, if $2\la\not\in L$, then
$V_{\la+L} \cong V_{-\la+L}$ as $V_L^\si$-modules.

\begin{theorem}[\cite{DN, AD}]\label{AD}
Let\/ $L$ be a positive-definite even lattice and\/ $\si=-1$ on\/ $L$.
Then any irreducible admissible\/ $V_L^\si$-module is isomorphic to one of the following$:$
\[
V_{\la+L}^\pm\quad (\la\in L^*, \,2\la\in L),\quad V_{\la+L}\quad (\la\in L^*, \,2\la\notin L),\quad V_L^{T,\pm},
\]
where\/ $T$ is an irreducible\/ $G_\si$-module.
\end{theorem}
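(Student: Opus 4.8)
The plan is to split the classification into two parts: that every module in the list is an irreducible $V_L^\si$-module (with the stated coincidences), and that the list is complete. Throughout I rely on $V_L$ being simple, rational and $C_2$-cofinite, on $\si=-1$ having order two on $V_L$ (immediate from \eqref{siact1} with $\eta\equiv1$), and on every admissible $V_L^\si$-module being completely reducible, which the excerpt supplies via \cite{DLM3,ABD,DJL}. The common mechanism is the decomposition of a $\si$-stable (untwisted or $\si$-twisted) irreducible $V_L$-module into $\si$-eigenspaces.

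The excerpt already fixes a $\si$-action on each candidate: \eqref{siact1} on $V_{\la+L}$ and \eqref{siact2} on $V_L^T$, with \eqref{siact3} showing these lifts satisfy $\si Y(a,z)\si^{-1}=Y(\si a,z)$. I would then apply a double-commutant (Schur) argument: if $M$ is an irreducible untwisted or $\si$-twisted $V_L$-module admitting a lift of $\si$, then $V_L^\si$ and this lift together force $\End_{V_L^\si}(M)\cong\CC[\ZZ/2\ZZ]$, so $M=M^+\oplus M^-$ with each nonzero eigenspace irreducible over $V_L^\si$ and $M^+\not\cong M^-$. For the always-stable modules $V_L^T$ this yields $V_L^{T,\pm}$, and for $V_{\la+L}$ it applies precisely when $V_{\la+L}\cong V_{-\la+L}$ as $V_L$-modules, i.e.\ $2\la\in L$, giving $V_{\la+L}^\pm$. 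When $2\la\notin L$, Dong's theorem makes $V_{-\la+L}$ inequivalent to $V_{\la+L}$ over $V_L$, so $V_{\la+L}$ remains irreducible over $V_L^\si$ (and becomes isomorphic to $V_{-\la+L}$ there), producing the third family.

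For completeness, let $W$ be any irreducible admissible $V_L^\si$-module; the goal is to exhibit $W$ as one of these eigenspaces. I would restrict $W$ to the Heisenberg orbifold $\F^\si\subset V_L^\si$, whose irreducible modules are classified in \cite{DN}. Complete reducibility then isolates a lowest-weight vector whose $\F^\si$-type encodes a weight in $\lieh^*$, and the intertwining operators among the $\F^\si$-modules \cite{A1,ADL} propagate the lattice action so as to match $W$ with the restriction of an untwisted $V_{\la+L}$ (weight in $L^*$) or a $\si$-twisted $V_L^T$ (weight in the half-integral shift forced by $\F_\si$). Equivalently, regarding $V_L=V_L^\si\oplus V_L^-$ as a $\ZZ/2\ZZ$-graded extension, one induces $W$ to a possibly twisted $V_L$-module and decomposes it by the classifications of \seref{twlat}; $W$ then reappears as a summand, hence as a listed eigenspace.

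The main obstacle is this completeness step: ruling out any ``exotic'' irreducible $V_L^\si$-module not of eigenspace type. The delicate point is to exclude submodules of the induced or restricted modules whose lowest-weight behaviour escapes the $\F^\si$-classification of \cite{DN}, which in practice demands control of the Zhu algebra $A(V_L^\si)$ or, equivalently, a count matching the irreducibles built above against the $\si$-stable and $\si$-unstable irreducible $V_L$-modules together with the irreducible $G_\si$-modules $T$. By contrast, once the double-commutant setup is in place I expect the irreducibility and inequivalence of the individual eigenspaces to be comparatively routine; the substantive work, carried out in \cite{DN,AD}, is the exhaustiveness.
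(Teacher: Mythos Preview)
The paper does not prove this theorem at all: it is quoted from \cite{DN,AD} as a known result and used as input for the rest of the paper, so there is no ``paper's own proof'' to compare your proposal against. Your sketch is a reasonable outline of how the argument in those references proceeds (build candidate modules as $\si$-eigenspaces of untwisted and $\si$-twisted $V_L$-modules, then establish exhaustiveness via the Zhu algebra or via the $\F^\si$-classification), and you correctly identify the exhaustiveness step as the hard part carried out in \cite{DN,AD}; but in the context of this paper the statement is simply cited, not reproved.
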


Next, we will discuss intertwining operators between the irreducible $V_L^\si$-modules. 
For a vector space $U$, denote by
\begin{equation*}
U\{z\}=\Bigl\{\sum_{n\in\QQ}v_{(n)}z^{-n-1}\Big|v_{(n)}\in U\Bigr\}
\end{equation*}
the space of $U$-valued formal series involving rational powers of $z$.
Let $V$ be a vertex algebra, and $M_1, M_2, M_3$ be $V$-modules, which are not necessarily distinct. 
Recall from \cite{FHL} that an \emph{intertwining operator} of type
$\displaystyle\binom{M_3}{M_1\;M_2}$
is a linear map \,\,$\mathcal{Y}\colon M_1\otimes M_2\to M_3\{z\}$, or equivalently,
\begin{align*}
\mathcal{Y}\colon M_1&\to\Hom(M_2,M_3)\{z\} \,,\\
v&\mapsto \mathcal{Y}(v,z)=\sum_{n\in\QQ}v_{(n)}z^{-n-1} \,,\quad v_{(n)}\in\Hom(M_2,M_3)
\end{align*}
such that $v_{(n)}u=0$ for $n\gg 0$, and the Borcherds identity \eqref{vert5} holds for $a\in V, b\in M_1$ and $c\in M_2$ with $k\in\QQ$ and $m, n\in\ZZ$.
The intertwining operators of type
$\displaystyle\binom{M_3}{M_1\;M_2}$
form a vector space denoted $\mathcal{V}_{M_1\,M_2}^{M_3}$. 
The {\it fusion rule} associated with an algebra $V$ and its modules $M_1, M_2, M_3$ is 
$N_{M_1\,M_2}^{M_3}=\dim\mathcal{V}_{M_1\,M_2}^{M_3}$.

The fusion rules for $V_L^\si$ and its irreducible modules
were calculated in \cite{A1, ADL} to be either zero or one. 
In order to present their theorem, we first introduce some additional notation.
We assume that $\la\in L^*$ is such that $2\la\in L$, and we let
\begin{align}
\label{pi}
\pi_{\la,\mu} 
&=(-1)^{|\la|^2|\mu|^2} \,, \qquad \la,\mu\in L^* \,, \\
\label{c}
c_\chi(\la)&=(-1)^{(\la|2\la)}\ep(\la,2\la)s(2\la) \,.
\end{align}
For any central character $\chi$ of $G_\si$, let $\chi^{(\la)}$ be the central character defined by 
\begin{equation}\label{newchi}
\chi^{(\la)}(U_\al)=(-1)^{(\al|\la)}\chi(U_\al) \,,
\end{equation}
and set $T^{(\la)}_\chi=T_{\chi^{(\la)}}$.
Note that when $\chi=\chi_\mu$ and $T=T_\mu$, we have $\chi_\mu^{(\la)} = \chi_{\la+\mu}$ and
$T_\mu^{(\la)} = T_{\la+\mu}$.

The following theorem is a special case of Theorem 5.1 from \cite{ADL}.

\begin{theorem}[\cite{ADL}]\label{ADL}
Let\/ $L$ be a positive-definite even lattice and\/ $\la\in L^*\cap\frac{1}{2}L$. 
Then for two irreducible\/ $V_L^\si$-modules\/ $M_2,M_3$ and for\/ $\epsilon\in\{\pm\}$,
the fusion rule of type\/ $\displaystyle\binom{M_3}{V_{\la+L}^{\epsilon}\;M_2}$
is equal to\/ $1$ if and only if the pair\/ $(M_2,M_3)$ is one of the following$:$
\begin{align*}
&(V_{\mu+L},V_{\la+\mu+L}), \quad \mu\in L^*, \;\; 2\mu\not\in L,\\
&(V_{\mu+L}^{\epsilon_1},V_{\la+\mu+L}^{\epsilon_2}),\quad \mu\in L^*, \;\; 2\mu\in L, \;\;\epsilon_1\in\{\pm\}, \;\;
\epsilon_2=\epsilon_1\epsilon\pi_{\la,2\mu}, \\
&(V_L^{T_\chi, \,\epsilon_1}, V_L^{T^{(\la)}_{\chi}, \,\epsilon_2}),\quad \epsilon_1\in\{\pm\}, \;\; \epsilon_2=c_\chi(\la)\epsilon_1\epsilon.
\end{align*}
In all other cases, the fusion rules of type\/ $\displaystyle\binom{M_3}{V_{\la+L}^{\epsilon}\;M_2}$ are zero.
\end{theorem}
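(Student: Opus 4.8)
The plan is to compute $N=\dim\mathcal{V}^{M_3}_{V_{\la+L}^{\epsilon}\,M_2}$ by relating the $V_L^\si$-intertwining operators in question to intertwining operators for the full lattice algebra $V_L$ (and its twisted modules), whose fusion rules are known to be $0$ or $1$ and governed by addition in $L^*/L$. The organizing device is the $\si$-eigenspace decomposition of every module in sight: $V_{\mu+L}=V_{\mu+L}^+\oplus V_{\mu+L}^-$ when $2\mu\in L$, and $V_L^{T}=V_L^{T,+}\oplus V_L^{T,-}$, together with the fact, recorded in \thref{AD}, that these eigenspaces (and the $V_{\mu+L}$ with $2\mu\notin L$) exhaust the irreducible $V_L^\si$-modules. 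I would treat the three cases of the theorem in parallel, the only difference being whether the ambient $V_L$-operator is untwisted (Cases 1--2) or a twisted intertwining operator among the $V_L^{T_\chi}$ (Case 3).

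For the lower bound I would exhibit explicit nonzero operators. In Cases 1 and 2 I start from the standard lattice intertwining operator of type $\binom{V_{\la+\mu+L}}{V_{\la+L}\,V_{\mu+L}}$, built from the vertex-operator formula \eqref{lat5} with the appropriate cocycle decoration; restricting its first argument to the eigenspace $V_{\la+L}^{\epsilon}$ and projecting its output onto a chosen eigenspace $V_{\la+\mu+L}^{\epsilon_2}$ produces a $V_L^\si$-intertwining operator of the stated type. In Case 3 the same construction is run with the twisted intertwining operator of type $\binom{V_L^{T_\chi^{(\la)}}}{V_{\la+L}\,V_L^{T_\chi}}$ assembled from \eqref{twlat12}; the shift of central character from $\chi$ to $\chi^{(\la)}$ via \eqref{newchi} is precisely the monodromy picked up when fusing a twisted module with $V_{\la+L}$.

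The crux is the $\si$-equivariance sign: writing $\si_i$ for the action of $\si$ on the $i$-th module, the essentially unique ambient operator $\mathcal{Y}$ satisfies $\si_3\,\mathcal{Y}(v,z)\,\si_2^{-1}=c\,\mathcal{Y}(\si_1 v,z)$ for a scalar $c\in\{\pm1\}$, and this $c$ dictates exactly which eigenspace pairings survive. Evaluating on lowest-weight vectors and feeding the definitions \eqref{siact1}, \eqref{lat5} and the cocycle identities \eqref{lat2}, \eqref{lat22} into this relation yields, in the untwisted cases, the sign $\pi_{\la,2\mu}=(-1)^{|\la|^2|2\mu|^2}$; here it is essential that $|\la|^2$ and $|\mu|^2$ are only half-integers (forced by $2\la,2\mu\in L$ and the evenness of $L$), which is what makes the sign nontrivial. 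In the twisted case the same computation must track the operators $U_\al$, the constants $B_{\al,\be}$, the central character $\chi$, and the fractional powers $z^{b_\al}$ of \eqref{twlat12} (with $\pi_0=0$ since $\si=-1$), and collecting all contributions produces $c_\chi(\la)=(-1)^{(\la|2\la)}\ep(\la,2\la)s(2\la)$. Matching $\si$-eigenvalues against $c$ then forces $\epsilon_2=\epsilon_1\epsilon\,\pi_{\la,2\mu}$ in Case 2 and $\epsilon_2=c_\chi(\la)\epsilon_1\epsilon$ in Case 3.

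The remaining point is the upper bound and exhaustion: each nonzero $V_L^\si$-intertwining space must be shown to have dimension exactly $1$, arising from the $1$-dimensional ambient space through the eigenspace bookkeeping, with all other pairs $(M_2,M_3)$ giving $0$. I expect the main obstacle to be twofold. First, extracting $c_\chi(\la)$ is genuinely delicate, because every factor in \eqref{twlat12} contributes a sign or root of unity that must be reconciled with the central-character relation. Second, and more seriously, ruling out extra orbifold intertwining operators not descending from $V_L$ requires more than the eigenspace count: one would supplement it with an $A(V_L^\si)$ (Zhu algebra) bound or a rationality/Verlinde-type argument, leaning on the completeness of the module list in \thref{AD}.
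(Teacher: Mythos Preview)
This theorem is not proved in the paper. It is quoted as a special case of Theorem~5.1 of \cite{ADL} (Abe--Dong--Li), and the paper uses it as a black box in the proofs of Theorems~\ref{result1} and~\ref{result2}. So there is no ``paper's own proof'' to compare your proposal against.

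That said, your outline is broadly in the spirit of how \cite{ADL} actually proceeds: exhibit explicit intertwining operators by restricting the known $V_L$-operators (untwisted and twisted) to $\si$-eigenspaces, compute the equivariance sign to pin down which eigenspace pairings survive, and then bound the dimension from above. Two caveats are worth flagging. First, your identification of the sign $\pi_{\la,2\mu}$ as $(-1)^{|\la|^2|2\mu|^2}$ does not match the paper's definition \eqref{pi}, which has $\pi_{\la,\mu}=(-1)^{|\la|^2|\mu|^2}$; be careful about which variable carries the factor of $2$. Second, as you yourself note, the upper bound is the hard part: \cite{ADL} does not get it from a simple eigenspace count but through a detailed analysis via $A(V_L^+)$-bimodules and the structure of intertwining operators, building on \cite{A1}. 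Your sketch correctly anticipates that some Zhu-algebra or rationality input is needed, but it underestimates the amount of work involved; this is really the substance of the cited paper, not something one can fill in on the side.
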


\section{Classification of irreducible modules
}\label{sirr}

In this section, we prove our main result, the classification of all irreducible modules over the orbifold vertex algebra $V_Q^\si$.
As before, $Q$ is a positive-definite even integral lattice and $\si$ is an isometry of $Q$ of order $2$. 

\subsection{The sublattice $\bar Q$}\label{sqbar}
The map $\si$ is extended by linearity to the complex vector space $\lieh=\CC\otimes_\ZZ Q$.
We will denote by 
\begin{equation}\label{qbar1}
\pi_\pm=\frac{1}{2}(1\pm\si) \,, \qquad \al_\pm = \pi_\pm(\al)
\end{equation}
the projections onto the eigenspaces of $\si$.
Introduce the important sublattices
\begin{equation}\label{Lpm}
L_\pm=\lieh_\pm\cap Q,\, \qquad L=L_+\oplus L_-\subseteq Q\,,
\end{equation}
where $\lieh_\pm=\pi_\pm(\lieh)$.
Note that $\lieh=\lieh_+\oplus\lieh_-$ is an orthogonal direct sum.

\begin{lemma}\label{le3.1}
We have\/ $\al_\pm\in (L_\pm)^* \subseteq L^*$ for any\/ $\al\in Q$.
\end{lemma}
\begin{proof}
Indeed,
\[
(\al_+|\be) = (\al_++\al_-|\be) =  (\al|\be) \in\ZZ
\]
for all $\al\in Q$, $\be\in L_+\subseteq Q$.
\end{proof}

Observe that 
$2\al_\pm\in L_\pm$ and $|\al_\pm|^2=\frac{1}{4}|2\al_\pm|^2\in\frac{1}{2}\ZZ$ for all $\al\in Q$.

\begin{lemma}\label{le3.2}
For\/ $\al\in Q$, the following are equivalent$:$
\begin{enumerate}[$(i)$]
\item $\si^2(e^\al)=e^\al,$
\item $|\al_\pm|^2\in\ZZ,$
\item $(\al|\si\al)\in 2\ZZ.$
\end{enumerate}
\end{lemma}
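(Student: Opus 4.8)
The plan is to prove the three-way equivalence by splitting it into (ii)$\IFF$(iii), which is an elementary computation with norms, and (i)$\IFF$(iii), which is a computation using the lift formula \eqref{twlat4} together with the cocycle identities \eqref{twlat3} and \eqref{lat22}.

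For (ii)$\IFF$(iii), I would use $\al_\pm=\frac12(\al\pm\si\al)$ and the fact that $\si$ preserves the form to expand
\[
|\al_\pm|^2=\tfrac12\bigl(|\al|^2\pm(\al|\si\al)\bigr).
\]
Since $Q$ is even, $|\al|^2\in2\ZZ$, so $|\al_+|^2\in\ZZ$ exactly when $(\al|\si\al)\in2\ZZ$, and the same holds for $|\al_-|^2$; indeed $|\al_+|^2$ and $|\al_-|^2$ are simultaneously integral because their sum is $|\al|^2\in2\ZZ$. This yields (ii)$\IFF$(iii) at once.

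For (i)$\IFF$(iii), I would first apply the lift \eqref{twlat4} and $\si^2=1$ on $Q$ to get
\[
\si^2(e^\al)=\si\bigl(\eta(\al)e^{\si\al}\bigr)=\eta(\al)\eta(\si\al)\,e^\al,
\]
so that (i) holds iff $\eta(\al)\eta(\si\al)=1$. The point is to evaluate the sign $\eta(\al)\eta(\si\al)$ in a way that does not depend on the a priori noncanonical choice of $\eta$. To this end I would specialize \eqref{twlat3} to $\be=\si\al$; using $\si^2\al=\al$ it becomes
\[
\eta(\al+\si\al)\,\ep(\si\al,\al)=\eta(\al)\eta(\si\al)\,\ep(\al,\si\al).
\]
Now $\al+\si\al=2\al_+\in L_+=Q\cap\lieh_0$, so the normalization \eqref{twlat2} gives $\eta(\al+\si\al)=1$. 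Solving for $\eta(\al)\eta(\si\al)$ and invoking \eqref{lat22} then produces
\[
\eta(\al)\eta(\si\al)=\ep(\si\al,\al)\ep(\al,\si\al)=(-1)^{(\al|\si\al)}.
\]
Hence (i) holds iff $(-1)^{(\al|\si\al)}=1$, i.e.\ iff $(\al|\si\al)\in2\ZZ$, which is (iii).

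The step I expect to be the main obstacle is the (i)$\IFF$(iii) direction: one must recognize that the two unknown signs $\eta(\al)$ and $\eta(\si\al)$, each individually depending on the choice of $\eta$, collapse into the single $\eta$-independent quantity $(-1)^{(\al|\si\al)}$. Concretely, this requires choosing the right specialization $\be=\si\al$ of the cocycle identity \eqref{twlat3} and exploiting that $\al+\si\al$ lands in the fixed sublattice $L_+$, where $\eta$ is trivial by \eqref{twlat2}. Once that cancellation is spotted, the rest is a direct application of \eqref{lat22}.
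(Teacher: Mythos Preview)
Your proof is correct and follows essentially the same approach as the paper: the (ii)$\IFF$(iii) direction is the same norm expansion, and for (i)$\IFF$(iii) the paper likewise computes $\si^2(e^\al)=\eta(\al)\eta(\si\al)e^\al$ and then uses \eqref{twlat3}, \eqref{twlat2}, and \eqref{lat22} to obtain $\eta(\al)\eta(\si\al)=(-1)^{(\al|\si\al)}$. You have simply made the specialization $\be=\si\al$ in \eqref{twlat3} and the role of $\al+\si\al\in Q\cap\lieh_0$ more explicit.
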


\begin{proof}
Note that $4|\al_\pm|^2=|\al\pm\si\al|^2=2|\al|^2\pm2(\al|\si\al)$, so that
\begin{equation*}
|\al_\pm|^2=\frac{1}{2}(\al|\si\al)\mod\ZZ.
\end{equation*}
This shows the equivalence between $(ii)$ and $(iii)$.

Using \eqref{twlat4}, we find $\si^2(e^\al)=\eta(\al)\eta(\si\al)e^\al$. On the other hand, 
by \eqref{lat22}, \eqref{twlat3} and \eqref{twlat2}, we have
\begin{equation*}
\eta(\al)\eta(\si\al) = \ep(\al,\si\al)\ep(\si\al,\al) = (-1)^{(\al|\si\al)}.
\end{equation*}
This proves the equivalence between $(i)$ and $(iii)$.
\end{proof}

From now on, we let 
\begin{equation}\label{qbar3}
\bar{Q}=\{\al\in Q \,|\, (\al|\si\al)\in 2\ZZ\} \,.
\end{equation}
It is clear that $\bar Q$ is $\si$-invariant.
\begin{lemma}\label{le3.3}
The subset\/ $\bar{Q}$ is a sublattice of\/ $Q$ of index\/ $1$ or\/ $2$.
\end{lemma}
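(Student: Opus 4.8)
The plan is to show that $\bar Q$ is closed under addition, which makes it a subgroup (hence sublattice) of $Q$, and then to bound the index by $2$ by exhibiting a group homomorphism $Q\to\ZZ/2\ZZ$ whose kernel is exactly $\bar Q$. First I would verify the subgroup property directly using the defining condition $(\al|\si\al)\in 2\ZZ$. Given $\al,\be\in\bar Q$, I compute
\begin{equation*}
(\al+\be\,|\,\si(\al+\be)) = (\al|\si\al) + (\be|\si\be) + (\al|\si\be) + (\be|\si\al) \,.
\end{equation*}
The first two terms are even by assumption, so the parity of the left-hand side is controlled by $(\al|\si\be)+(\be|\si\al)$. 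Since $\si$ is an isometry of order two, $(\be|\si\al) = (\si\be|\si^2\al) = (\si\be|\al) = (\al|\si\be)$, so this cross term equals $2(\al|\si\be)\in 2\ZZ$. Hence the sum is even and $\al+\be\in\bar Q$; the same identity with $\be$ replaced by $-\be$ handles closure under inverses, so $\bar Q$ is a sublattice.

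The key observation for the index bound is that the map
\begin{equation*}
f\colon Q\to\ZZ/2\ZZ \,, \qquad f(\al) = (\al|\si\al) \bmod 2
\end{equation*}
is a group homomorphism with kernel $\bar Q$. Indeed, the computation above shows precisely that $f(\al+\be) = f(\al) + f(\be)$ in $\ZZ/2\ZZ$, since the cross terms vanish modulo $2$. The kernel of $f$ is by definition $\bar Q$, so $Q/\bar Q$ embeds into $\ZZ/2\ZZ$ and therefore has order $1$ or $2$. Thus the index $[Q:\bar Q]$ is $1$ or $2$, as claimed.

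I do not expect any serious obstacle here; the only point requiring a little care is confirming that $f$ is genuinely additive, which rests on the isometry property $(\si\al|\si\be)=(\al|\be)$ together with $\si^2=1$ to symmetrize the cross term into $2(\al|\si\be)$. One could alternatively phrase this through \leref{le3.2}, which already identifies $\bar Q$ as the set of $\al$ with $|\al_\pm|^2\in\ZZ$; additivity of $f$ then reflects the fact that $|\al_\pm|^2 + |\be_\pm|^2 - |(\al+\be)_\pm|^2 = -(\al_\pm|\be_\pm) \cdot 2 \in\ZZ$ is an integer, but the direct bilinear computation with $(\al|\si\al)$ is cleanest. Either way the homomorphism $f$ into a group of order $2$ immediately yields the index statement.
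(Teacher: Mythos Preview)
Your proof is correct and follows essentially the same approach as the paper: both expand $(\al\pm\be|\si(\al\pm\be))$ bilinearly and use the isometry property together with $\si^2=1$ to show the cross terms contribute $2(\al|\si\be)\in 2\ZZ$, so that $(\al|\si\al)\bmod 2$ is additive. The paper phrases the conclusion as ``if $\al,\be\in\bar Q$ or $\al,\be\notin\bar Q$, then $\al-\be\in\bar Q$,'' while you package the same fact as a group homomorphism $f\colon Q\to\ZZ/2\ZZ$ with kernel $\bar Q$; these are the same argument.
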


\begin{proof}
For $\al, \be\in Q$, we have
\begin{align*}
(\al-\be|\si\al-\si\be) 
=(\al|\si\al)+(\be|\si\be) \mod 2\ZZ \,,
\end{align*}
since 
\begin{equation*}
(\al|\si\be) = (\si\al|\si^2\be) = (\be|\si\al) \,.
\end{equation*}
Now if $\al, \be\in\bar Q$ or $\al, \be\notin\bar Q$, then $\al-\be\in\bar{Q}$.
\end{proof}

As a consequence of Lemmas \ref{le3.1} and \ref{le3.2}, we obtain:
\begin{corollary}\label{co3.4}
The lattices\/ $\ZZ\al_\pm+L$ are integral for all\/ $\al\in\bar Q$.
\end{corollary}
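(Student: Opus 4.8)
The plan is to check directly that the symmetric bilinear form of $Q$, extended to $\lieh$, takes integer values on every pair of elements of $\ZZ\al_\pm+L$. Since each of these lattices is generated over $\ZZ$ by $\al_\pm$ together with a $\ZZ$-basis of $L$, it suffices to verify integrality on the generators. I would treat $\ZZ\al_++L$ explicitly, the case of $\ZZ\al_-+L$ being completely analogous.

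There are three types of inner product to consider. First, the self-pairing $(\al_+|\al_+)=|\al_+|^2$: since $\al\in\bar Q$ means precisely $(\al|\si\al)\in2\ZZ$, condition $(iii)$ of \leref{le3.2} holds, and its equivalence with $(ii)$ gives $|\al_+|^2\in\ZZ$ (and likewise $|\al_-|^2\in\ZZ$). This is the one place where the hypothesis $\al\in\bar Q$ is actually used; for a general $\al\in Q$ one only has $|\al_\pm|^2\in\frac12\ZZ$. Second, the cross terms $(\al_+|\be)$ for $\be\in L$: by \leref{le3.1} we have $\al_+\in(L_+)^*\subseteq L^*$, so $(\al_+|\be)\in\ZZ$. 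Third, the pairings $(\be|\ga)$ for $\be,\ga\in L$: these are integers simply because $L\subseteq Q$ and $Q$ is integral.

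Together these three cases exhaust all pairs of generators, so $\ZZ\al_++L$ is integral, and the identical argument with $\pi_-$ in place of $\pi_+$ yields the integrality of $\ZZ\al_-+L$. I do not expect any genuine obstacle here: the entire content is packaged in the two preceding lemmas, and the corollary is just the observation that $|\al_\pm|^2\in\ZZ$ for $\al\in\bar Q$ (from \leref{le3.2}) together with $\al_\pm\in L^*$ for all $\al\in Q$ (from \leref{le3.1}) are exactly the two nontrivial integrality conditions needed, the remaining products being inherited from the integrality of $Q$.
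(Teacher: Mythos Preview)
Your proof is correct and is exactly what the paper intends: the corollary is stated there simply as ``a consequence of Lemmas \ref{le3.1} and \ref{le3.2}'' without further elaboration, and your three-case check on generators is the natural unpacking of that sentence. There is nothing to add.
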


By definition, we have $(V_Q)^{\si^2}=V_{\bar{Q}}$, and
\begin{equation}\label{qbar2}
V_Q^\si = \bigl((V_Q)^{\si^2} \bigr)^\si = V_{\bar Q}^\si \,.
\end{equation}
Therefore, we may assume that $|\si|=2$ on $V_Q$ and only work with the sublattice $\bar{Q}$. For simplicity, we use $Q$ instead of $\bar{Q}$ for the rest of this section.

\subsection{Restricting the orbifold $V_Q^\si$ to $V_L^\si$}\label{srestr}

By \cite{FHL, LL}, we have that the subalgebra $V_L$ of $V_Q$ is isomorphic to the
tensor product $V_{L_+}\otimes V_{L_-}$, since $L=L_+\oplus L_-$ is an orthogonal direct sum.
Note that $\si$ acts as the identity operator on $L_+$ and as $-1$ on $L_-$. Then
$V_L^\si\cong V_{L_+}\otimes V_{L_-}^+$ is a subalgebra of $V_Q^\si$. 

\begin{proposition}\label{decomp}
Every\/ $V_Q^\si$-module is a direct sum of irreducible\/ $V_L^\si$-modules. In particular, $V_Q^\si$ has this form.
\end{proposition}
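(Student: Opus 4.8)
The plan is to deduce the decomposition from the rationality of the subalgebra $V_L^\si$, once we verify that restriction from $V_Q^\si$ to $V_L^\si$ respects the module structure in the appropriate category.

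First I would record that $L$ is a finite-index sublattice of $Q$ with $\CC\otimes_\ZZ L = \lieh_+\oplus\lieh_- = \lieh = \CC\otimes_\ZZ Q$, so that $V_L$ and $V_Q$ share the same Heisenberg subalgebra $\F$ and, in particular, the same conformal vector $\om\in\F$. Since $\si$ preserves the bilinear form, $\om$ is $\si$-invariant, whence $\om\in V_L^\si\subseteq V_Q^\si$ and the two orbifolds carry the same Virasoro element. Consequently, for any $V_Q^\si$-module $M$, the $L_0$-grading obtained by restricting the vertex operator map to the subalgebra $V_L^\si$ coincides with the grading that $M$ already carries as a $V_Q^\si$-module. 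Thus the restriction of $M$ is again an admissible $V_L^\si$-module.

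Next I would invoke rationality. Since $L=L_+\oplus L_-$ is an orthogonal direct sum, $V_L^\si\cong V_{L_+}\otimes V_{L_-}^+$ as in \seref{srestr}, where $V_{L_+}$ is rational by Dong's theorem \cite{D1}, and $V_{L_-}^+$, being the $\si=-1$ orbifold of the lattice vertex algebra $V_{L_-}$, is rational by \cite{ABD,DJL}. A tensor product of rational vertex algebras is again rational \cite{DLM3}, so $V_L^\si$ is rational; equivalently, every admissible $V_L^\si$-module is a direct sum of irreducible ones.

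Combining the two observations, the restriction of any $V_Q^\si$-module $M$ to $V_L^\si$ is an admissible $V_L^\si$-module, hence a direct sum of irreducible $V_L^\si$-modules; taking $M=V_Q^\si$ itself gives the final assertion. The main obstacle is the compatibility step rather than the rationality, which is imported from the cited works: one must ensure that passing to the subalgebra does not carry $M$ outside the category in which complete reducibility holds. This is precisely what the shared conformal vector $\om$ guarantees, since it forces the grading operator $L_0=\om_{(1)}$ to act identically whether $M$ is viewed over $V_Q^\si$ or over $V_L^\si$.
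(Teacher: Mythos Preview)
Your argument follows the same overall route as the paper's: factor $V_L^\si\cong V_{L_+}\otimes V_{L_-}^+$ and import complete reducibility from the literature on the two tensor factors. The one substantive difference is that the paper invokes \emph{regularity} rather than rationality. By \cite{DLM3}, $V_{L_+}$ is regular; by \cite{A2,ABD,DJL}, $V_{L_-}^+$ is regular; and tensor products of regular vertex algebras are regular \cite{DLM3}. Regularity says every \emph{weak} module is completely reducible, so once you restrict a $V_Q^\si$-module $M$ to $V_L^\si$ you are done---there is no need to verify that the restriction lands in the admissible category. Your shared-conformal-vector discussion is correct but only shows that admissibility is preserved under restriction, which presupposes that $M$ was admissible over $V_Q^\si$ to begin with; regularity removes that hypothesis and makes the compatibility paragraph unnecessary.

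Two minor points. First, the rationality (in fact regularity) of $V_{L_+}$ is due to \cite{DLM3}, not \cite{D1}; Dong's paper \cite{D1} classifies the irreducible $V_{L_+}$-modules but does not establish complete reducibility. Second, the statement in \cite{DLM3} that you need for the tensor product is likewise about regularity, and it is this stronger form that transfers cleanly to the product.
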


\begin{proof}
It is shown in Theorem 3.16 of \cite{DLM3} that the vertex algebra $V_{L_+}$ is regular, since $L_+$ is positive definite.
It is also shown in \cite{A2,ABD, DJL} that the vertex algebra $V_{L_-}^+$ is regular.
Since the tensor product of regular vertex algebras is again regular (Proposition 3.3 in \cite{DLM3}), we have that 
$V_L^\si\cong V_{L_+}\otimes V_{L_-}^+$ is also regular. 
\end{proof}

In order to obtain a precise description of $V_Q^\si$, we will decompose $V_Q$ as 
a direct sum of irreducible modules over $V_L^\si$.
This is done in two steps. The first step is to break $V_Q$ as a direct sum of $V_L$-modules, using the cosets
of $Q$ modulo $L$.
Since the lattice $Q$ is integral, we have $Q\subseteq L^*$ and we can view $Q/L$ as a subgroup of $L^*/L$. It follows that
\[
V_Q=\displaystyle\bigoplus_{\ga+L\in Q/L} V_{\ga+L} \,,
\]
where each $V_{\ga+L}$ is an irreducible $V_L$-module \cite{D1}. 
Writing
$\ga=\ga_{+}+\ga_{-}$, we get
\[
V_{\ga+L}\cong V_{\ga_++L_+}\otimes V_{\ga_-+L_-}
\]
as modules over $V_L\cong V_{L_+}\otimes V_{L_-}$.
Therefore,
\begin{equation}\label{orbsplit}
V_Q\cong\bigoplus_{\ga+L\in Q/L} V_{\ga_++L_+}\otimes V_{\ga_-+L_-}
\end{equation}
as $V_L$-modules. Note that $\ga_++L_+$ and $\ga_-+L_-$ depend only on the coset $\ga+L$ and not on the representative $\ga\in Q$,
because $\al_\pm\in L_\pm$ for $\al\in L$.

Since $\si\ga_-=-\ga_-$ and $2\ga_-\in L_-$, it follows that
$\si$ acts on the $V_{L_-}$-module $V_{\ga_-+L_-}$.
The second step is to decompose each module $V_{\ga_-+L_-}$ into eigenspaces for $\si$, 
which are irreducible as $V_{L_-}^+$-modules \cite{AD}.
We thus obtain the following description of $V_Q^\si$.

\begin{proposition}\label{orb}
The orbifold\/ $V_Q^\si$ decomposes as a direct sum of irreducible\/ $V_L^\si$-modules as follows$:$
\begin{eqnarray}
V_Q^\si\cong\bigoplus_{\ga+L\in Q/L} V_{\ga_++L_+}\otimes V_{\ga_-+L_-}^{\eta(\ga)}\,.
\end{eqnarray}
\end{proposition}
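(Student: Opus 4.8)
The plan is to read off the $\si$-invariants directly from the $V_L$-module decomposition \eqref{orbsplit}, one coset at a time. The first step is to observe that $\si$ preserves each summand $V_{\ga+L}$: since $\si\ga=\ga_+-\ga_-$, we have $\si\ga-\ga=-2\ga_-\in L_-\subseteq L$, so $\si\ga\equiv\ga\pmod L$ and $\si(V_{\ga+L})=V_{\ga+L}$. Consequently $V_Q^\si=\bigoplus_{\ga+L\in Q/L}(V_{\ga+L})^\si$, and it suffices to compute $(V_{\ga+L})^\si$ for each coset and match it against $V_{\ga_++L_+}\otimes V_{\ga_-+L_-}^{\eta(\ga)}$.

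Next I would fix the normalization of $\eta$. The condition $\ep(\si\al,\si\be)=\ep(\al,\be)$ holds for all $\al,\be\in L=L_+\oplus L_-$, since $\si$ is $+1$ on $L_+$ and $-1$ on $L_-$ while $\ep$ is bimultiplicative with values in $\{\pm1\}$; hence \leref{leta} lets me choose $\eta$ with $\eta|_L=1$. I then want $\eta$ to be constant on cosets of $L$. For $\mu\in L$, expanding $\ep(\ga,\mu)$ and $\ep(\si\ga,\si\mu)$ bimultiplicatively shows that their ratio is $\ep(\ga_+,\mu_-)^2\ep(\ga_-,\mu_+)^2=1$, so $\ep(\ga,\mu)=\ep(\si\ga,\si\mu)$; plugging this and $\eta(\mu)=1$ into \eqref{twlat3} gives $\eta(\ga+\mu)=\eta(\ga)$. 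Thus $\eta(\ga)$ is a well-defined function on $Q/L$, as the statement requires.

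The heart of the argument is to transport the action of $\si$ through the isomorphism $V_{\ga+L}\cong V_{\ga_++L_+}\otimes V_{\ga_-+L_-}$ and identify it with $1\otimes(\eta(\ga)\si_-)$, where $\si_-$ is the $\si=-1$ action \eqref{siact1} on the $V_{L_-}$-module $V_{\ga_-+L_-}$. Working on a spanning vector $h^1_{(-n_1)}\cdots h^k_{(-n_k)}e^{\ga+\mu}$ and splitting each $h^i$ into its $\lieh_\pm$ components, formula \eqref{twlat4} shows that $\si$ acts by the scalar $(-1)^q\eta(\ga+\mu)$ and replaces $e^{\ga+\mu}$ with $e^{\si(\ga+\mu)}$, where $q$ is the number of modes lying in $\lieh_-$; this is exactly the sign and the exponential flip produced by $1\otimes\si_-$, up to the scalar $\eta(\ga+\mu)=\eta(\ga)$. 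The one point demanding care -- and the main obstacle -- is that the fixed tensor-product identification of \cite{FHL,LL} introduces cross-cocycle signs when rewriting $e^\nu$ as $e^{\nu_+}\otimes e^{\nu_-}$, so I must check that these signs agree for $\nu=\ga+\mu$ and for $\si\nu=\nu_+-\nu_-$. They do, because the two differ only by $\ep(\nu_+,\nu_-)^{\pm1}$, which squares to $1$. Alternatively, one can finesse the cocycle bookkeeping via Schur's lemma: both $\si$ and $\eta(\ga)(1\otimes\si_-)$ satisfy $T\,Y(a,z)=Y(\si a,z)\,T$ for $a\in V_L$ (using \eqref{siact3} together with $\si|_{V_{L_+}}=\id$), so they differ by a scalar on the irreducible $V_L$-module $V_{\ga+L}$; since both operators square to the identity (recall $\si^2=\id$ on $V_Q$ after the reduction to $\bar Q$), that scalar is $\pm1$, and evaluating on $e^\ga$ via $\si(e^\ga)=\eta(\ga)e^{\si\ga}$ pins it to $+1$.

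Granting this identification, $\si$ acts as the identity on the factor $V_{\ga_++L_+}$ and as $\eta(\ga)\si_-$ on $V_{\ga_-+L_-}$, so its fixed subspace is $V_{\ga_++L_+}\otimes V_{\ga_-+L_-}^{\eta(\ga)}$, the $\eta(\ga)$-eigenspace of $\si_-$. Summing over $\ga+L\in Q/L$ yields the asserted decomposition. Finally, each summand is an irreducible $V_L^\si=V_{L_+}\otimes V_{L_-}^+$-module: $V_{\ga_++L_+}$ is irreducible over $V_{L_+}$ by \cite{D1}, and $V_{\ga_-+L_-}^{\eta(\ga)}$ is irreducible over $V_{L_-}^+$ by \thref{AD}, so the tensor product is irreducible.
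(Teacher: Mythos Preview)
Your proof is correct and follows essentially the same strategy as the paper: reduce to computing the $\si$-invariants in each summand $V_{\ga+L}$ and identify them via the tensor decomposition. The paper is more terse, simply exhibiting the invariant vector $v_\ga=e^\ga+\eta(\ga)e^{\si\ga}=e^{\ga_+}\otimes(e^{\ga_-}+\eta(\ga)e^{-\ga_-})$ in the target and invoking irreducibility, whereas you compute the full $\si$-action as $1\otimes(\eta(\ga)\si_-)$; your additional care with the normalization $\eta|_L=1$ and the coset-independence of $\eta(\ga)$ fills in details the paper leaves implicit.
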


\begin{proof}
Using \eqref{orbsplit}, it is enough to determine the subspace $S_\ga$ of $\si$-invariants in $V_{\ga_++L_+}\otimes V_{\ga_-+L_-}$,
for a fixed $\ga\in Q$. As a $V_L^\si$-module, $S_\ga$ is generated by the element (cf.\ \eqref{twlat4}):
\begin{equation}\label{vgamma}
v_\ga=e^\ga+\eta(\ga)e^{\si\ga}=e^{\ga_+}\otimes (e^{\ga_-}+\eta(\ga)e^{-\ga_-}) \in V_{\ga_++L_+}\otimes V_{\ga_-+L_-}^{\eta(\ga)} \,.
\end{equation}
Since $V_{\ga_++L_+}\otimes V_{\ga_-+L_-}^{\eta(\ga)}$ is an irreducible $V_L^\si$-module, it must be equal to $S_\ga$.
\end{proof}

From the study of tensor products in \cite{FHL, LL},
all irreducible modules over $V_L^\si \cong V_{L_+} \otimes V_{L_-}^+$ are tensor products of irreducible modules over the factors $V_{L_+}$ and $V_{L_-}^+$. 
By the results of  \cite{D1,DN,AD} reviewed in Sections \ref{twlat} and \ref{tcasesi-1}, we obtain that all irreducible $V_L^\si$-modules have the form:
\begin{enumerate}
\item $V_{\la+L_+}\otimes V_{\mu+L_-}$, where $\la\in L_+^*$, $\mu\in L_-^*$, $2\mu\not\in L_-$,
\item $V_{\la+L_+}\otimes V_{\mu+L_-}^\pm$, where $\la\in L_+^*$, $\mu\in L_-^*$, $2\mu\in L_-$,
\item $V_{\la+L_+}\otimes V_{L_-}^{T,\pm}$, where $\la\in L_+^*$,
\end{enumerate}
and $T$ is an irreducible module for the group $G_\si$ associated to the lattice $L_-$.
We refer to the $V_L^\si$-modules obtained from untwisted $V_L$-modules as {\it orbifold modules of untwisted type} and the ones obtained from twisted $V_L$-modules as {\it orbifold modules of twisted type}.

\subsection{Irreducible Modules over $V_Q^\si$}

In this subsection, we present our main result, the explicit classification of irreducible $V_Q^\si$-modules. As a consequence, we will find all of them as submodules of twisted or untwisted $V_Q$-modules. Recall that, by \eqref{qbar2}, we can assume that $Q=\bar Q$.

\begin{theorem}\label{result1}
Let\/ $Q$ be a positive-definite even lattice, and\/ $\sigma$ be an automorphism of\/ $Q$ of order two
such that\/ $(\al|\si\al)$ is even for all\/ $\al\in Q$. Then as a module over\/ $V_L^\si\cong V_{L_+}\otimes V_{L_-}^+$
each irreducible\/ $V_Q^\sigma$-module is isomorphic to one of the following$:$
\begin{align}
\label{AD1}
&\bigoplus_{\ga+L\in Q/L}V_{\ga_++\la+L_+}\otimes V_{\ga_-+\mu+L_-}
\qquad (2\mu\not\in L_-)\,, \\
\label{AD2}
&\bigoplus_{\ga+L\in Q/L}V_{\ga_++\la+L_+}\otimes V_{\ga_-+\mu+L_-}^{\epsilon\eta(\ga)}
\qquad (2\mu\in L_-)\,, \\
\label{AD3}
&\bigoplus_{\ga+L\in Q/L}V_{\ga_++\la+L_+}\otimes V_{L_-}^{T_\chi^{(\ga_-)},\epsilon_\ga}
\,,
\end{align}
where\/ $\la\in L_+^*$, $\mu\in L_-^*$, $\epsilon\in\{\pm\}$, 
$\chi$ is a central character for the group\/ $G_\si$ associated to\/ $L_-$, and\/ 
$\epsilon_\ga=\epsilon\eta(\ga)c_\chi(\ga_-)$.
\end{theorem}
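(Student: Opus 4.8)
The plan is to restrict an irreducible $V_Q^\si$-module to the subalgebra $V_L^\si\cong V_{L_+}\otimes V_{L_-}^+$ and to reconstruct it from the decomposition of $V_Q^\si$ in \prref{orb} together with the fusion rules of \thref{ADL}. Let $M$ be an irreducible $V_Q^\si$-module. By \prref{decomp} its restriction to $V_L^\si$ is a direct sum of irreducible $V_L^\si$-modules, each of the form $V_{\la+L_+}\otimes N$ with $N$ one of the three types of irreducible $V_{L_-}^+$-module listed after \prref{orb}. First I would fix a single summand $W_0=V_{\la+L_+}\otimes N_0$; since $M$ is irreducible and $V_Q^\si=\bigoplus_{\ga+L\in Q/L}V_{\ga_++L_+}\otimes V_{\ga_-+L_-}^{\eta(\ga)}$, we have $M=V_Q^\si\cdot W_0=\sum_{\ga+L}Y\bigl(V_{\ga_++L_+}\otimes V_{\ga_-+L_-}^{\eta(\ga)},z\bigr)W_0$, and the action of the $\ga$-component on $W_0$ is an intertwining operator of $V_L^\si$.

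The second step is to factor these intertwining operators through the tensor decomposition $V_L^\si\cong V_{L_+}\otimes V_{L_-}^+$. By \cite{FHL, ADL} the corresponding fusion rule splits as a product of a $V_{L_+}$-fusion rule and a $V_{L_-}^+$-fusion rule. The first factor is the simple-current fusion of the lattice vertex algebra $V_{L_+}$, so fusing $V_{\ga_++L_+}$ with $V_{\la+L_+}$ produces $V_{\ga_++\la+L_+}$ and nothing else. The second factor is exactly \thref{ADL} applied with the lattice $L_-$, with $\ga_-\in L_-^*\cap\frac12 L_-$ in the first slot, with sign $\eta(\ga)$, and with $M_2=N_0$.

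The third step is a case analysis on the type of $N_0$, reading off the output from \thref{ADL}; here the standing hypothesis $Q=\bar Q$ enters decisively. Since $\eta(0)=1$, the base summand ($\ga+L=L$) fixes the parameters. If $N_0=V_{\mu+L_-}$ with $2\mu\notin L_-$, the unique nonzero fusion gives $V_{\ga_-+\mu+L_-}$, producing the $\ga$-summand of \eqref{AD1}. If $N_0=V_{\mu+L_-}^{\epsilon}$ with $2\mu\in L_-$, it gives $V_{\ga_-+\mu+L_-}^{\epsilon_2}$ with $\epsilon_2=\epsilon\,\eta(\ga)\,\pi_{\ga_-,2\mu}$; but because $Q=\bar Q$ we have $|\ga_-|^2\in\ZZ$ by \leref{le3.2}, and since $|2\mu|^2\in2\ZZ$ this forces $\pi_{\ga_-,2\mu}=1$, so $\epsilon_2=\epsilon\eta(\ga)$ as in \eqref{AD2}. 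If $N_0=V_{L_-}^{T_\chi,\epsilon}$, it gives $V_{L_-}^{T_\chi^{(\ga_-)},\epsilon_\ga}$ with $\epsilon_\ga=\epsilon\,\eta(\ga)\,c_\chi(\ga_-)$ as in \eqref{AD3}. Summing over $\ga+L\in Q/L$ and using that the pair of labels $(\ga_++L_+,\ga_-+L_-)$ determines the coset $\ga+L$ (so distinct cosets give independent $V_L^\si$-summands), I would conclude that $M$ has exactly one of the three stated forms, with each $\ga$-summand occurring once because every nonzero fusion rule in \thref{ADL} equals $1$.

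I expect the main obstacle to lie in the converse—showing that each of the three families actually carries a $V_Q^\si$-module structure, so that the list is both exhaustive and non-redundant. Abstractly this means gluing the factorwise intertwining operators into a single associative action of $V_Q^\si=\bigoplus_\ga(\ga\text{-component})$, which forces cocycle-type compatibilities among $\eta$, $\pi_{\la,\mu}$, and $c_\chi$ that reduce, via \eqref{twlat3} and the bimultiplicativity of $\ep$, to identities on $Q/L$. The cleaner route is to realize each family explicitly inside a twisted or untwisted $V_Q$-module—types \eqref{AD1}--\eqref{AD2} as $\si$-eigenspaces of untwisted modules $V_{\la'+Q}$ in the manner of \seref{tcasesi-1}, and type \eqref{AD3} from $\si$-twisted $V_Q$-modules constructed via \leref{ltensprod2} and \seref{twlat}—and then to match the $V_L^\si$-decomposition of these concrete modules against \eqref{AD1}--\eqref{AD3}. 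Carrying out this matching, and thereby confirming all the signs $\epsilon_\ga$, is the technical heart of the argument.
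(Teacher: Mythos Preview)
Your argument is correct and follows essentially the same path as the paper's: restrict to $V_L^\si$, fix one irreducible summand, and use the decomposition of $V_Q^\si$ in \prref{orb} together with the fusion rules of \thref{ADL} (with $\pi_{\ga_-,2\mu}=1$ coming from $|\ga_-|^2\in\ZZ$, exactly as the paper does) to identify every other summand uniquely. One clarification: the ``converse'' you worry about in your final paragraph is not part of this theorem---the statement only asserts that every irreducible $V_Q^\si$-module has one of the listed $V_L^\si$-decompositions, and the realization of each type inside a twisted or untwisted $V_Q$-module is precisely the content of the subsequent \thref{result2}, so the paper separates the two claims just as you anticipate.
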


\begin{proof}
Let $W$ be an irreducible $V_Q^\sigma$-module. Then $W$ is a $V_L^\si$-module by restriction and, by Proposition \ref{decomp}, $W$ is a direct sum of irreducible $V_L^\si$-modules. 
Suppose $A\subseteq W$ is an irreducible $V_L^\si$-module, and define $A^{(\ga)}$ from $A$ as follows.
If 
\[
A=V_{\la+L_+}\otimes V_{\mu+L_-}, \quad V_{\la+L_+}\otimes V_{\mu+L_-}^\epsilon, \quad\text{or}\quad
V_{\la+L_+}\otimes V_{L_-}^{T_\chi,\epsilon}, 
\]
then 
\begin{align*}
A^{(\ga)} = V_{\la+\ga_++L_+}\otimes V_{\mu+\ga_-+L_-}, \quad
&V_{\la+\ga_++L_+} \otimes V_{\mu+\ga_-+L_-}^{\epsilon\eta(\ga)}, \\
&\text{and}\quad  V_{\la+\ga_++L_+}\otimes V_{L_-}^{T_\chi^{(\ga_-)},\epsilon_\ga}, 
\end{align*}
respectively,
where $\epsilon\in\{\pm\}$ and $\epsilon_\ga=\epsilon\eta(\ga)c_\chi(\ga_-)$.
We will consider separately the untwisted and twisted types.

Let $A$ be of untwisted type, i.e., one of the modules $V_{\la+L_+}\otimes V_{\mu+L_-}$ for $2\mu\not\in L_-$, 
or $V_{\la+L_+}\otimes V_{\mu+L_-}^\epsilon$ for $2\mu\in L_-$.
Let $B\subseteq W$ be another irreducible $V_L^\si$-module that is possibly of twisted type.
By Proposition \ref{orb}, $V_Q^\si$ is a direct sum of irreducible $V_L^\si$-modules generated by the vectors $v_\ga$ from 
\eqref{vgamma}, where $\ga\in Q$.
By restricting the field $Y(v_\ga,z)$ to $A$ and then projecting onto $B$, we obtain an intertwining operator of $V_L^\si$-modules of type $\displaystyle\binom{B}{V_{\ga+L}^{\eta(\ga)}\;\;A}$. From the study of intertwining operators in \cite{ADL}, we have that the intertwining operator $Y(v_\ga,z)$ can be written as the tensor product
\[
Y(v_\ga,z)=Y(e^{\ga_+},z)\otimes Y(e^{\ga_-}+\eta(\ga)e^{-\ga_-},z),
\]
where $Y(e^{\ga_+},z)$ is an intertwining operator of type $\displaystyle\binom{V_{\la'+L_+}}{V_{\ga_++L_+}\;\;V_{\la+L_+}}$ and $Y(e^{\ga_-}+\eta(\ga)e^{-\ga_-},z)$ is an intertwining operator of type\\ $\displaystyle\binom{V_{\mu'+L_-}}{V_{\ga_-+L_-}^{\eta(\ga)}\;\;V_{\mu+L_-}}$ or of type $\displaystyle\binom{V_{\mu'+L_-}^{\pm\eta(\ga)}}{V_{\ga_-+L_-}^{\eta(\ga)}\;\;V_{\mu+L_-}^\pm}$.
By \cite{DL1}, the fusion rules for $Y(e^{\ga_+},z)$ are zero unless $\la'=\la+\ga_+$.
Since $\ga_-\in L_-^*\cap\frac{1}{2}L_-$ and $|\ga_-|^2\in\ZZ$, we have that $\pi_{\ga_-,2\mu}=1$ when $2\mu\in L_-$
(cf.\ \eqref{pi}). Hence the fusion rules for $Y(e^{\ga_-}+\eta(\ga)e^{-\ga_-},z)$ are zero unless $\mu'=\mu+\ga_-$, by Theorem \ref{ADL}.
Therefore, for $\ga+L\in Q/L$, we have $B=A^{(\ga)}$. Hence $A\subseteq W$ implies that $\bigoplus_{\ga\in Q/L}A^{(\ga)}\subseteq W$. Since $W$ is irreducible, we obtain that
\[
W=\bigoplus_{\ga+L\in Q/L}A^{(\ga)}.
\]

Now let $A=V_{\la+L_+}\otimes V_{L_-}^{T_\chi,\pm}$ and $B\subseteq W$ be another irreducible $V_L^\si$-module that is possibly of untwisted type. As above, the field $Y(v_\ga,z)$ gives rise to an intertwining operator of $V_L^\si$-modules of type $\displaystyle\binom{B}{V_{\ga+L}^{\eta(\ga)}\;\;A}$ and can be written as the tensor product
\[
Y(v_\ga,z)=Y(e^{\ga_+},z)\otimes Y(e^{\ga_-}+\eta(\ga)e^{-\ga_-},z),
\]
where $Y(e^{\ga_+},z)$ is an intertwining operator of type $\displaystyle\binom{V_{\la'+L_+}}{V_{\ga_++L_+}\;\;V_{\la+L_+}}$ and $Y(e^{\ga_-}+\eta(\ga)e^{-\ga_-},z)$ is an intertwining operator of type\\ $\displaystyle\binom{V_{L_-}^{T_{\chi'},\pm\epsilon}}{V_{\ga_-+L_-}^{\eta(\ga)}\;\;V_{L_-}^{T_\chi,\pm}}$, where $\epsilon=c_\chi(\ga)\eta(\ga)$. As with the untwisted type, the fusion rules for $Y(e^{\ga_+},z)$ are zero unless $\la'=\la+\ga_+$.
 By Theorem \ref{ADL}, the action of $Y(e^{\ga_-},z)$ on $V_{L_-}^{T_\chi}$ is determined by computing $c_\chi(\ga_-)$ (cf. \eqref{c}) and is zero unless $\chi'=\chi^{(\ga_-)}$. Since the lattice $\ZZ\ga_-+L_-$ is integral (by \coref{co3.4}), the map $\ep$ can be extended to this lattice with values $\pm 1$. Therefore $\ep(\ga_-,2\ga_-)=\ep(\ga_-,\ga_-)^2=1$ and \eqref{c} becomes
$
c_\chi(\ga_-)=s(2\ga_-);
$
see \eqref{chiual}.
Hence the eigenspace of each summand in the $V_Q^\si$-module may change depending on the signs of each $U_{2\ga_-}$.
Therefore, $B=A^{(\ga)}$ and
\[
W=\bigoplus_{\ga+L\in Q/L}A^{(\ga)}.
\]
This completes the proof.
\end{proof}

\begin{theorem}\label{result2}
Let\/ $Q$ be a positive-definite even lattice, and\/ $\sigma$ be an automorphism of\/ $Q$ of order two
such that\/ $(\al|\si\al)$ is even for all\/ $\al\in Q$.  Then every irreducible\/ $V_Q^\sigma$-module is a submodule of a\/ $V_Q$-module or a\/ $\si$-twisted\/ $V_Q$-module.
\end{theorem}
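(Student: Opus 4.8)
The plan is to take each of the three families of irreducible $V_Q^\si$-modules produced by \thref{result1} and exhibit it explicitly as a submodule of a suitable (possibly twisted) $V_Q$-module. The strategy mirrors the two-step decomposition already used to analyze $V_Q^\si$ itself: we pass to the sublattice $L=L_+\oplus L_-$, where everything factors as a tensor product over $V_{L_+}\otimes V_{L_-}$, and we exploit the fact that the summation over $\ga+L\in Q/L$ appearing in each module is exactly the summation that rebuilds a $Q$-graded (twisted or untwisted) $V_Q$-module from its $V_L$-isotypic pieces. In each case I would construct the ambient $V_Q$-module first, restrict it to $V_L^\si$, and recognize the target $V_Q^\si$-module as the $\si$-eigenspace sitting inside it.

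First I would treat the untwisted families \eqref{AD1} and \eqref{AD2}. Fix $\la\in L_+^*$ and $\mu\in L_-^*$ and set $\nu=\la+\mu\in L^*$. The untwisted $V_Q$-module to use is $V_{\nu+Q}=\F\otimes\CC_\ep[Q]e^\nu$ (when $\nu+Q$ lies in $Q^*/Q$), which by Dong's theorem is an irreducible $V_Q$-module; restricting it to $V_L$ and decomposing over the cosets $\ga+L\in Q/L$ gives exactly $\bigoplus_{\ga+L\in Q/L} V_{\ga_++\la+L_+}\otimes V_{\ga_-+\mu+L_-}$, which is \eqref{AD1}. When $2\mu\in L_-$ the extra automorphism $\si$ acts on this restricted module via \eqref{siact1}, commuting with the $V_Q^\si$-action by \eqref{siact3}, so the $\pm$-eigenspaces $V_{\nu+Q}^{\pm}$ are $V_Q^\si$-submodules, and comparing $\si$-eigenvalues coset by coset (recall $\si$ multiplies the $V_{\ga_-+\mu+L_-}$ factor by a sign $\eta(\ga)$ times the eigenvalue on $V_{\mu+L_-}^{\pm}$) identifies these eigenspaces with \eqref{AD2}. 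The bookkeeping of the sign $\epsilon\eta(\ga)$ is routine given \eqref{vgamma} and the action \eqref{siact1}.

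For the twisted family \eqref{AD3} I would run the parallel argument starting from a $\si$-twisted $V_Q$-module $\F_\si\otimes\Om$, as constructed in \seref{twlat}. Restricting this to $V_L=V_{L_+}\otimes V_{L_-}$ and using \leref{ltensprod2} (applied to the orthogonal splitting $L=L_+\oplus L_-$ with $\si$ trivial on $L_+$ and $\si=-1$ on $L_-$) factors it as $V_{\la+L_+}\otimes(\F_\si^{L_-}\otimes T_\chi)$ summed over the cosets, where the twisted Fock space and $G_\si$-module on $L_-$ produce the modules $V_{L_-}^{T_\chi,\pm}$ of \thref{AD}. The twist shifts $\chi$ to $\chi^{(\ga_-)}$ as one moves between cosets, by the definition \eqref{newchi} and the relation $U_{\si\al}=C_\al$-translate of $U_\al$ built into $G_\si$; decomposing into $\si$-eigenspaces via \eqref{siact2} and tracking the resulting sign $\epsilon_\ga=\epsilon\eta(\ga)c_\chi(\ga_-)$ recovers \eqref{AD3} exactly. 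The computation $c_\chi(\ga_-)=s(2\ga_-)$ carried out in the proof of \thref{result1} is what makes these signs match, and \coref{co3.4} guarantees $\ZZ\ga_-+L_-$ is integral so $\ep$ extends.

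The main obstacle I expect is not the construction of the ambient modules but verifying that the $\si$-eigenspace structure of the restricted $V_Q$- or twisted $V_Q$-module reassembles \emph{coset by coset} into precisely the sign pattern dictated by $\eta(\ga)$ and $\epsilon_\ga$; this is exactly the content already extracted in the fusion-rule computation inside \thref{result1}, so the cleanest route is to invoke that computation rather than redo it. A secondary point requiring care is the case distinction $2\mu\in L_-$ versus $2\mu\notin L_-$: when $2\mu\notin L_-$ the module $V_{\mu+L_-}$ is $\si$-stable only up to isomorphism $V_{\mu+L_-}\cong V_{-\mu+L_-}$ (as in \seref{tcasesi-1}), so the untwisted $V_Q$-module $V_{\nu+Q}$ is already $\si$-irreducible as a $V_Q^\si$-module and no eigenspace splitting is needed—this is why \eqref{AD1} carries no $\epsilon$. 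Once these sign and case issues are reconciled with the earlier results, the theorem follows immediately since every irreducible $V_Q^\si$-module is, by \thref{result1}, one of \eqref{AD1}--\eqref{AD3}, and each has just been realized inside a twisted or untwisted $V_Q$-module.
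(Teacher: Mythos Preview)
Your outline has the right architecture, but there is a genuine gap in the untwisted case that is not merely bookkeeping. You write that the ambient module is $V_{\nu+Q}$ ``(when $\nu+Q$ lies in $Q^*/Q$)'' and then proceed as if this hypothesis is satisfied. It is not automatic: a priori $\la\in L_+^*$ and $\mu\in L_-^*$ give only $\nu=\la+\mu\in L^*$, and since in general $L\subsetneq Q$ one has $L^*\supsetneq Q^*$, so there is no reason for $\nu$ to land in $Q^*$. \thref{result1} only asserts that every irreducible $V_Q^\si$-module is \emph{isomorphic} to something of the shape \eqref{AD1}--\eqref{AD3}; it does not claim that every choice of $(\la,\mu)\in L_+^*\times L_-^*$ actually yields a $V_Q^\si$-module, and in fact most do not. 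The paper closes this gap by exploiting the module structure of $W$ itself: writing out $Y(v_\ga,z)e^{\la+\mu}$ explicitly via the ADL intertwiners and formula \eqref{lat5}, one sees that the leading exponent of $z$ is $(\ga|\la+\mu)$ modulo integers, and since $W$ is an honest (untwisted) $V_Q^\si$-module this exponent must be integral for every $\ga\in Q$. That forces $\la+\mu\in Q^*$, and only then does the ambient module $V_{\la+\mu+Q}$ exist. Without this step your construction has no object to embed $W$ into.

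A parallel issue appears in your twisted argument. You propose to start from an abstract $\si$-twisted $V_Q$-module $\F_\si\otimes\Om$ and decompose it, hoping to see the given $\la$ and $\chi$ appear; but you have not shown that the particular $\la\in L_+^*$ and central character $\chi$ attached to $W$ actually arise from some $\Om$. The paper runs the argument in the opposite direction: starting from $W$, it uses the ADL intertwiners (which realize \eqref{twlat12} up to scalars absorbable into the $U_\ga$) to \emph{define} operators $Y(e^\ga,z)$ on the larger space $\bigoplus_{\ga+L\in Q/L} V_{\ga_++\la+L_+}\otimes V_{L_-}^{T_\chi^{(\ga_-)}}$, thereby manufacturing the ambient $\si$-twisted $V_Q$-module directly around $W$. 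Your sketch can be salvaged, but the logical direction must be reversed: use the given $V_Q^\si$-action on $W$ to constrain the parameters and to build the ambient module, rather than postulating the ambient module and searching for $W$ inside it.
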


\begin{proof}
Let us consider first the untwisted case.
By Theorem \ref{result1}, any irreducible $V_Q^\si$-module $W$ of untwisted type is given by \eqref{AD1} or \eqref{AD2}
when considered as a $V_L^\si$-module by restriction.
For a fixed $\ga\in Q$,
the nonzero fusion rules for $Y(e^{\ga_+},z)$ and $Y(e^{\ga_-}+\eta(\ga)e^{-\ga_-},z)$ are equal to $1$ and the intertwining operators in \cite{ADL} are given by the usual formula \eqref{lat5} up to a scalar multiple. 
Using that 
\[
(\ga|\la+\mu)-(\si\ga|\la+\mu)=(2\ga_-|\la+\mu)=(2\ga_-|\mu)\in\ZZ \,,
\]
we have that for some $m\in\ZZ$,
\begin{align*}
Y(v_\ga,z)  e^{\la+\mu} &=
Y(e^\ga+\eta(\ga)e^{\si\ga},z) e^{\la+\mu} \\
&= z^{(\ga|\la+\mu)}\bigl(E(\ga,z)e^{\ga+\la+\mu}+\eta(\ga)z^mE(\si\ga,z)e^{\si\ga+\la+\mu}\bigr) \,,
\end{align*}
where 
\[
E(\al,z)=\exp\Bigl( \sum_{n<0} \al_n \frac{z^{-n}}{-n} \Bigr)\exp\Bigl( \sum_{n>0} \al_n \frac{z^{-n}}{-n} \Bigr)
\]
contains only integral powers of $z$. This implies that $(\la+\mu|\ga)\in\ZZ$ for all $\ga \in Q$, i.e., $\la+\mu\in Q^*$.
Then for the untwisted $V_Q$-module $V_{\la+\mu+Q}$, we have that
\begin{align*}
V_{\la+\mu+Q}&=\bigoplus_{\ga+L\in Q/L}V_{\ga+\la+\mu+L}\\
&\cong\bigoplus_{\ga+L\in Q/L}V_{\ga_++\la+L_+}\otimes V_{\ga_-+\mu+L_-}\\
\end{align*}
as a direct sum of irreducible $V_L$-modules. 
Using the intertwining operators in \cite{ADL}, we see that $W$ is a submodule of the restriction of $V_{\la+\mu+Q}$ to $V_Q^\si$.

Now we consider the twisted case.
By Theorem \ref{result1}, any irreducible $V_Q^\si$-module $W$ of twisted type is given by \eqref{AD3} when considered as a $V_L^\si$-module by restriction.
Then, for $\ga\in Q$, the nonzero fusion rules for $Y(e^{\ga_+},z)$ and $Y(e^{\ga_-}+\eta(\ga)e^{-\ga_-},z)$ are equal to $1$ and the intertwining operators in \cite{ADL} are given by the usual formula \eqref{twlat12} up to a scalar multiple. Since these scalars can be absorbed in $U_\ga$, we will have \eqref{twlat12} without loss of generality. Therefore, the action of $Y(e^\ga,z)$ on $W$ can be determined,
so that its modes are linear maps 
\[
V_{\la+L_+}\otimes V_{L_-}^{T_\chi,\epsilon}\to V_{\ga_++\la+L_+}\otimes V_{L_-}^{T_\chi^{(\ga_-)},\epsilon_\ga}  \qquad (\epsilon\in\{\pm\}) \,,
\]
where $\epsilon_\ga=\epsilon c_\chi(\ga)\eta(\ga)$ (cf.\ \eqref{AD3}).
Hence the $\si$-twisted $V_Q$-module is given by 
\[
\bigoplus_{\ga+L\in Q/L}V_{\ga_++\la+L_+}\otimes V_{L_-}^{T_\chi^{(\ga_-)}}
\]
and its restriction to $V_Q^\si$ contains $W$. 
\end{proof}

\section{Root lattices and Dynkin diagram automorphisms}\label{sroot}

In this section, we present examples of the lattice $Q$ being a root lattice of type ADE, corresponding to the simply-laced simple Lie algebras. We use the classification from \cite{D1, DN, AD} and the construction of twisted modules from Section \ref{twlat} to construct explicitly all irreducible $V_Q^\si$-modules. In each case, a correspondence between the two constructions is shown.
In order to apply Theorems \ref{result1} and \ref{result2}, we first calculate $\bar{Q}$ and $L$. Then the twisted $V_{L}$-modules are found. When necessary, the intertwiners from \cite{ADL} are used to construct the $V_Q^\si$-modules. The untwisted and twisted $V_{\bar Q}$-modules are calculated using $(\bar{Q})^*/\bar{Q}$ and its $\si$-invariant elements. 
For more details and additional examples, the reader is referred to \cite{E}.

\subsection{$A_2$ root lattice with a Dynkin diagram automorphism}\label{A2D}

Consider the $A_2$ simple roots $\{\alpha_1, \alpha_2\}$,
the associated root lattice $Q=\mathbb{Z}\alpha_1+\mathbb{Z}\alpha_2$, and the Dynkin diagram automorphism $\sigma\colon \alpha_1 \leftrightarrow \alpha_2$. 
The case of $A_n$ root lattice for even $n$ is similar and is treated in \cite{E}.

Set $\alpha=\alpha_1+\alpha_2$ and $\beta=\alpha_1-\alpha_2$. Then
\[
|\alpha |^2=2 \,, \qquad |\beta |^2=6 \,, \qquad (\alpha |\beta)=0 \,,
\]
and
\[
L_+=\ZZ\al \,, \qquad L_-=\ZZ\be \,, \qquad \bar{Q}=L= L_+\oplus L_- \,.
\]
Therefore, 
\begin{equation*}
V_Q^\si =V_{\bar Q}^\si=V_L^\si \cong V_{\ZZ\al}\otimes V_{\ZZ\be}^+ \,.
\end{equation*}
Using the results of \cite{FHL,D1,DN}, we obtain a total of $20$ distinct irreducible $V_L^\si$-modules:
\begin{align*}
&V_{\mathbb{Z}\alpha}\otimes V_{\mathbb{Z}\beta}^\pm \,, &\qquad& V_{\frac{\alpha}{2}+\mathbb{Z}\alpha}\otimes V_{\mathbb{Z}\beta}^\pm \,,\\
&V_{\mathbb{Z}\alpha}\otimes V_{\frac{\beta}{2}+\mathbb{Z}\beta}^\pm \,, &\qquad& V_{\frac{\alpha}{2}+\mathbb{Z}\alpha}\otimes V_{\frac{\beta}{2}+\mathbb{Z}\beta}^\pm \,,\\
&V_{\mathbb{Z}\alpha}\otimes V_{\frac{\beta}{6}+\mathbb{Z}\beta} \,, &\qquad& V_{\frac{\alpha}{2}+\mathbb{Z}\alpha}\otimes V_{\frac{\beta}{6}+\mathbb{Z}\beta} \,,\\
&V_{\mathbb{Z}\alpha}\otimes V_{\frac{\beta}{3}+\mathbb{Z}\beta} \,, &\qquad& V_{\frac{\alpha}{2}+\mathbb{Z}\alpha}\otimes V_{\frac{\beta}{3}+\mathbb{Z}\beta} \,,\\
&V_{\mathbb{Z}\alpha}\otimes V_{\mathbb{Z}\beta}^{T_j, \pm} \,, &\qquad& V_{\frac{\alpha}{2}+\mathbb{Z}\alpha}\otimes V_{\mathbb{Z}\beta}^{T_j, \pm}\qquad (j=1,2) \,,
\end{align*}
where $T_1,T_2$ denote the two irreducible modules over the group $G_\si$ associated to the lattice $\ZZ\be$. They are $1$-dimensional and on them $U_\beta=\pm\ii/64$ by \eqref{chiual}.

On the other hand, by Dong's Theorem \cite{D1}, the irreducible $V_L$-modules have the form $V_{\la+L}$ $(\la+L\in L^*/L)$. If $\si(\la+L)=\la+L$, the module $V_{\la+L}$ breaks into eigenspaces $V_{\la+L}^\pm$ of $\si$;
otherwise, $\si$ is an isomorphism of $V_L^\si$-modules $V_{\la+L} \to V_{\si(\la)+L}$.
Thus, there are 12 distinct irreducible $V_L^\si$-modules of untwisted type:
\begin{align*}
& V_L^\pm \,, \quad V_{\frac{\al}2+L}^\pm \,, \quad V_{\frac{\be}2+L}^\pm \,, \quad V_{\frac{\al}2+\frac{\be}2+L}^\pm \,, \\
& V_{\frac{\be}6+L} \,, \quad V_{\frac{\be}3+L} \,, \quad V_{\frac{\al}2+\frac{\be}6+L} \,, \quad V_{\frac{\al}2+\frac{\be}3+L} \,.
\end{align*}
The correspondence of these modules to those above is given by:
\[
V_{m\al+n\be+L} \cong V_{m\al+L_+} \otimes V_{n\be+L_-}  \qquad (m,n\in\QQ) \,.
\]

Now we will construct the $\si$-twisted $V_L$-modules using Section \ref{twlat}. Consider the irreducible modules over the group $G_\si$ associated to the lattice $L$.
One finds that on them $U_\beta=\pm\ii/64$ and $U_\al$ acts freely. Hence, such modules can be identified with the space $P=\mathbb{C}[q,q^{-1}]$, where
$U_{\alpha}$ acts as a multiplication by $q$. Then on $P$ we have:
\begin{equation*}
e^{\pi\ii\alpha_{(0)}} = s_1 \,, \qquad
U_{\alpha} = q \,, \qquad
U_\beta = s_2\frac{\ii}{64} \,,
\end{equation*}
where the signs $s_1,s_2\in\{\pm\}$ are independent. The corresponding four $G_\si$-modules will be denoted as $P_{(s_1,s_2)}$.
The irreducible $\si$-twisted $V_L$-modules have the form $\F_\si\otimes P_{(s_1,s_2)}$, where $\F_\si$ is the $\si$-twisted Fock space
(see \eqref{twheis2}).
Next, we restrict the $\si$-twisted $V_L$-modules to $V_L^\si$.
The automorphism $\si$ acts on each $P_{(s_1,s_2)}$ as the identity operator, since
\[
\si(q^n)=\sigma(U_{\alpha}^n\cdot 1)=U_{\alpha}^n\cdot 1=q^n \,.
\]
We obtain $8$ irreducible $V_L^\si$-modules of twisted type:
\[
\F_\si^\pm\otimes P_{(s_1,s_2)} \,, \qquad s_1,s_2\in\{\pm\} \,,
\]
where $\F_\si^\pm$ are the $\pm1$-eigenspaces of $\si$.
We have the following correspondence among irreducible $V_L^\si$-modules of twisted type:
\begin{align*}
\F_\si^\pm\otimes P_{(+,+)} &\cong V_{\mathbb{Z}\alpha}\otimes V_{\mathbb{Z}\beta}^{T_1, \pm}, &\qquad
\F_\si^\pm\otimes P_{(-,+)} &\cong V_{\frac{\al}{2}+\mathbb{Z}\alpha}\otimes V_{\mathbb{Z}\beta}^{T_1, \pm},\\
\F_\si^\pm\otimes P_{(+,-)} &\cong V_{\mathbb{Z}\alpha}\otimes V_{\mathbb{Z}\beta}^{T_2, \pm}, &\qquad
\F_\si^\pm\otimes P_{(-,-)} &\cong V_{\frac{\al}{2}+\mathbb{Z}\alpha}\otimes V_{\mathbb{Z}\beta}^{T_2, \pm}.
\end{align*}

\subsection{$A_2$ root lattice with the negative of a Dynkin diagram automorphism}\label{A2nD}

Consider now the negative Dynkin diagram automorphism $\phi=-\si$. 
Keeping the notation from the previous subsection, we have: $L_+=\ZZ\be$, $L_-=\ZZ\al$, and 
\begin{equation*}
V_Q^\phi\cong V_{\ZZ\be}\otimes V_{\ZZ\al}^+ \,.
\end{equation*}
Furthermore, $L_+^*/L_+= \ZZ\frac{\be}6/\ZZ\be$ and $\bigl(L_-^*/L_-\bigr)^\phi=L_-^*/L_- = \ZZ\frac{\al}2/\ZZ\al$. 
Thus, using the results of \cite{FHL,D1,DN}, we obtain a total of $48$ distinct irreducible $V_Q^\phi$-modules:
\begin{equation*}
V_{i\frac{\be}6+\mathbb{Z}\be}\otimes V_{\mathbb{Z}\al}^\pm\,, \qquad V_{i\frac{\be}6+\mathbb{Z}\be}\otimes V_{\frac{\al}{2}+\mathbb{Z}\al}^\pm\,, \qquad V_{i\frac{\be}6+\mathbb{Z}\be}\otimes V_{\mathbb{Z}\al}^{T_j, \pm}\,,
\end{equation*}
where $i=0,\dots,5$ and $j=1,2$.

\subsection{$A_3$ root lattice with a Dynkin diagram automorphism}\label{A3D}

Consider the $A_3$ simple roots $\{\alpha_1, \alpha_2, \alpha_3\}$,
the root lattice $Q=\mathbb{Z}\alpha_1+\mathbb{Z}\alpha_2+\mathbb{Z}\alpha_3$, and the Dynkin diagram automorphism $\sigma\colon \alpha_1 \leftrightarrow \alpha_3$, $\alpha_2 \leftrightarrow \alpha_2$. 
The case of $A_n$ root lattice for odd $n$ is similar and is treated in \cite{E}.

Set $\alpha=\alpha_1+\alpha_3$ and $\beta=\alpha_1-\alpha_3$. 
Then $\bar{Q}=Q$ and
\begin{equation*}
L_+=\mathbb{Z}\alpha+\mathbb{Z}\alpha_2 \,, \qquad
L_-=\mathbb{Z}\beta \,, \qquad
Q/L=\{L,\alpha_1+L\} \,. 
\end{equation*}
Hence, by Proposition \ref{orb}, 
\begin{equation*}
V_Q^\sigma\cong\bigl(V_{L_+}\otimes V_{\mathbb{Z}\beta}^+\bigr)\oplus\bigl(V_{\frac{\alpha}{2}+L_+}\otimes V_{\frac{\beta}{2}+\mathbb{Z}\beta}^+\bigr).
\end{equation*}
It follows from Theorems \ref{ADL} and \ref{result1} that the irreducible $V_Q^\si$-modules are given by:
\begin{align*}
&\bigl(V_{L_+}\otimes V_{\mathbb{Z}\beta}^\pm\bigr)\oplus\bigl(V_{\frac{\alpha}{2}+{L_+}}\otimes V_{\frac{\beta}{2}+\mathbb{Z}\beta}^\pm\bigr)\,, \\
&\bigl(V_{L_+}\otimes V_{\frac{\beta}{2}+\mathbb{Z}\beta}^\pm\bigr)\oplus\bigl(V_{\frac{\al}{2}+L_+}\otimes V_{\mathbb{Z}\beta}^\pm\bigr) \,,\\
&\bigl(V_{\frac{\alpha_2}{2}+{L_+}}\otimes V_{\frac{\beta}{4}+\mathbb{Z}\beta}\bigr)\oplus\bigl(V_{\frac{\alpha_2}{2}+\frac{\alpha}{2}+{L_+}}\otimes V_{\frac{\beta}{4}+\mathbb{Z}\beta}\bigr)\,\\
&\bigl(V_{L_+}\otimes V_{\mathbb{Z}\beta}^{T_1,\pm}\bigr)\oplus\bigl(V_{\frac{\alpha}{2}+{L_+}}\otimes V_{\mathbb{Z}\beta}^{T_1,\pm}\bigr) \,, \\
&\bigl(V_{L_+}\otimes V_{\mathbb{Z}\beta}^{T_2,\pm}\bigr)\oplus\bigl(V_{\frac{\alpha}{2}+{L_+}}\otimes V_{\mathbb{Z}\beta}^{T_2,\mp}\bigr) \,,
\end{align*}
where $U_\be=(-1)^j/16$ on $T_j$ for $j=1,2$ (see \eqref{chiual}).

On the other hand, by Dong's Theorem \cite{D1}, the irreducible $V_Q$-modules have the form $V_{\la+Q}$ $(\la+Q\in Q^*/Q)$.
We have
\[
Q^*/Q= \{Q, \la_1+Q, 2\la_1+Q, 3\la_1+Q\} \,,
\]
where
\[
\lambda_1=\frac{1}{4}(3\alpha_1+2\alpha_2+\alpha_3) \,.
\]
Note that $Q$ and $2\la_1+Q = \frac{\al}2+Q$ are $\si$-invariant, while $\si(\la_1+Q)=3\la_1+Q$.
Thus, there are $5$ distinct irreducible $V_Q^\si$-modules of untwisted type:
\[
V_Q^\pm \,,\qquad V_{\frac{\al}2+Q}^\pm \,,\qquad V_{\lambda_1+Q}  \,.
\]
We have the following correspondence:
\begin{align*}
V_Q^\pm &\cong \bigl(V_{L_+}\otimes V_{\mathbb{Z}\beta}^\pm\bigr)\oplus\bigl(V_{\frac{\alpha}{2}+{L_+}}\otimes V_{\frac{\beta}{2}+\mathbb{Z}\beta}^\pm\bigr)\,, \\
V_{\frac{\al}2+Q}^\pm &\cong \bigl(V_{L_+}\otimes V_{\frac{\beta}{2}+\mathbb{Z}\beta}^\pm\bigr)\oplus\bigl(V_{\frac{\al}{2}+L_+}\otimes V_{\mathbb{Z}\beta}^\pm\bigr) \,,\\
V_{\lambda_1+Q} &\cong \bigl(V_{\frac{\alpha_2}{2}+{L_+}}\otimes V_{\frac{\beta}{4}+\mathbb{Z}\beta}\bigr)\oplus\bigl(V_{\frac{\alpha_2}{2}+\frac{\alpha}{2}+{L_+}}\otimes V_{\frac{\beta}{4}+\mathbb{Z}\beta}\bigr)\,.
\end{align*}

We now construct the $\si$-twisted $V_Q$-modules using Section \ref{twlat}. Consider the irreducible modules over the group $G_\si$ associated to the lattice $Q$.
We find from \eqref{chiual} that on them $U_\beta=\pm1/16$, and there are two such modules, $P_1$ and $P_2$, corresponding to $U_\beta=-1/16$ and $U_\beta=1/16$,
respectively. For both $j=1,2$, we can identify $P_j=\mathbb{C}[q, q^{-1},p, p^{-1}]$ so that
\[
U_{\alpha_1}=q \,, \qquad
U_{\alpha_2}=p(-1)^{q\frac{\partial}{\partial q}}\,.
\]
Then on $P_j$ we have:
\[
U_{\alpha_3}=(-1)^{j+1}  q \,, \qquad
U_\alpha=(-1)^{j+1} 4q^2\,, \qquad
U_\beta=\frac{(-1)^j}{16} \,.
\]
The automorphism $\si$ acts on each of these modules: $\si$ is the identity operator on $P_1$, while on $P_2$ we have $\si=(-1)^{q\frac{\partial}{\partial q}}$.
Hence, $P_2$ decomposes into two eigenspaces $P_2^\pm$ with eigenvalues $\pm1$.
The irreducible $\si$-twisted $V_Q$-modules have the form $\F_\si\otimes P_j$ for $j=1,2$, where $\F_\si$ is the $\si$-twisted Fock space
(see \eqref{twheis2}). Since $\F_\si$ itself decomposes into $\pm 1$-eigenspaces of $\si$, 
we obtain $4$ distinct irreducible $V_Q^\si$-modules of twisted type:
\begin{align*}
\bigl(\F_\si\otimes P_1\bigr)^\pm &= \F_\si^\pm\otimes P_1 \,,\\
\bigl(\F_\si\otimes P_2\bigr)^\pm &= \bigl(\F_\si^\pm\otimes P_2^+\bigr)\oplus\bigl(\F_\si^\mp\otimes P_2^-\bigr) \,.
\end{align*}
We have the following correspondence:
\begin{align*}
\bigl(\F_\si\otimes P_1\bigr)^\pm &\cong \bigl(V_{L_+}\otimes V_{\mathbb{Z}\beta}^{T_1,\pm}\bigr)\oplus\bigl(V_{\frac{\alpha}{2}+{L_+}}\otimes V_{\mathbb{Z}\beta}^{T_1,\pm}\bigr) \,, \\
\bigl(\F_\si\otimes P_2\bigr)^\pm &\cong \bigl(V_{L_+}\otimes V_{\mathbb{Z}\beta}^{T_2,\pm}\bigr)\oplus\bigl(V_{\frac{\alpha}{2}+{L_+}}\otimes V_{\mathbb{Z}\beta}^{T_2,\mp}\bigr) \,.
\end{align*}

\subsection{$A_3$ root lattice with the negative of a Dynkin diagram automorphism}\label{A3nD}

Consider now the negative Dynkin diagram automorphism, $\phi=-\si$. With the notation from the previous subsection, we have: 
\[
L_+=\ZZ\be \,, \qquad L_-=\ZZ\al+\ZZ\al_2\,, \qquad \bigl(L_-^*/L_-\bigr)^\phi=L_-^*/L_- \,,
\]
and
\begin{equation*}
V_Q^\phi\cong \bigl(V_{\mathbb{Z}\beta}\otimes V_{L_-}^+\bigr)\oplus\bigl(V_{\frac{\be}{2}+\ZZ\be}\otimes V_{\frac{\al}{2}+L_-}^+\bigr)\,.
\end{equation*}
By Theorem \ref{result1}, 
the irreducible $V_Q^\phi$-modules of untwisted type are given by:
\begin{align*}
&\bigl(V_{\mathbb{Z}\beta}\otimes V_{L_-}^\pm\bigr)\oplus\bigl(V_{\frac{\be}{2}+\ZZ\be}\otimes V_{\frac{\al}{2}+L_-}^\pm\bigr) \,,\\
&\bigl(V_{\mathbb{Z}\beta}\otimes V_{\frac{\al}{2}+L_-}^\pm\bigr)\oplus\bigl(V_{\frac{\be}{2}+\ZZ\be}\otimes V_{L_-}^\pm\bigr) \,,\\
&\bigl(V_{\frac{\be}{4}+\mathbb{Z}\beta}\otimes V_{\frac{\al_2}{2}+L_-}^\pm\bigr)\oplus\bigl(V_{\frac{3\be}{4}+\ZZ\be}\otimes V_{\frac{\al+\al_2}{2}+L_-}^\pm\bigr) \,,\\
&\bigl(V_{\frac{\be}{4}+\mathbb{Z}\beta}\otimes V_{\frac{\al+\al_2}{2}+L_-}^\pm\bigr)\oplus\bigl(V_{\frac{3\be}{4}+\ZZ\be}\otimes V_{\frac{\al_2}{2}+L_-}^\pm\bigr) \,,
\end{align*}
and the ones of twisted type are:
\begin{align*}
&\bigl(V_{\ZZ\be}\otimes V_{L_-}^{T_1,\pm}\bigr)\oplus\bigl(V_{\frac{\be}{2}+\ZZ\be}\otimes V_{L_-}^{T_1,\pm}\bigr) \,, \\
&\bigl(V_{\ZZ\be}\otimes V_{L_-}^{T_2,\pm}\bigr)\oplus\bigl(V_{\frac{\be}{2}+\ZZ\be}\otimes V_{L_-}^{T_2,\mp}\bigr) \,.
\end{align*}

\subsection{$D_n$ root lattice with a Dynkin diagram automorphism}\label{Dn}

Consider the $D_n$ simple roots $\{\alpha_1, \ldots, \alpha_n\}$, where $n\geq4$,
the root lattice $Q$, 
and the Dynkin diagram automorphism $\sigma\colon \alpha_{n-1} \leftrightarrow \alpha_n$ and $\alpha_i \leftrightarrow \alpha_i$ for $i=1,\ldots, n-2$. 
This case is similar to the $A_3$ case discussed in Section \ref{A3D}, so we will be brief (see \cite{E} for details).

Set $\alpha=\alpha_{n-1}+\alpha_n$ and $\beta=\alpha_{n-1}-\alpha_n$. 
Then 
\[
L_+=\mathbb{Z}\alpha+\sum_{i=1}^{n-2}\mathbb{Z}\alpha_i \,, \qquad L_-=\mathbb{Z}\beta \,, \qquad Q=\bar{Q} \,,
\]
and
\[
Q/L=\{L,\alpha_{n-1}+L\} \,.
\]
Hence, by Proposition \ref{orb}, 
\begin{equation*}
V_Q^\sigma\cong\bigl(V_{L_+}\otimes V_{\mathbb{Z}\beta}^+\bigr)\oplus\bigl(V_{\frac{\alpha}{2}+L_+}\otimes V_{\frac{\beta}{2}+\mathbb{Z}\beta}^+\bigr) \,.
\end{equation*}
By Theorems \ref{ADL} and \ref{result1},
the irreducible $V_Q^\si$-modules are given by:
\begin{align*}
&\bigl(V_{L_+}\otimes V_{\mathbb{Z}\beta}^\pm\bigr)\oplus\bigl(V_{\frac{\alpha}{2}+L_+}\otimes V_{\frac{\beta}{2}+\mathbb{Z}\beta}^\pm\bigr) \,,\\
&\bigl(V_{L_+}\otimes V_{\frac{\beta}{2}+\mathbb{Z}\beta}^\pm\bigr)\oplus\bigl(V_{\frac{\alpha}{2}+L_+}\otimes V_{\mathbb{Z}\beta}^\pm\bigr) \,,\\
&\bigl(V_{\frac{n-1}{4}\alpha+\theta+L_+}\otimes V_{\frac{\beta}{4}+\mathbb{Z}\beta}\bigr)\oplus\bigl(V_{\frac{n+1}{4}\alpha+\theta+L_+}\otimes V_{\frac{\beta}{4}+\mathbb{Z}\beta}\bigr) \,,\\
&\bigl(V_{L_+}\otimes V_{\mathbb{Z}\beta}^{T_1,\pm}\bigr)\oplus\bigl(V_{\frac{\alpha}{2}+{L_+}}\otimes V_{\mathbb{Z}\beta}^{T_1,\pm}\bigr) \,,\\
&\bigl(V_{L_+}\otimes V_{\mathbb{Z}\beta}^{T_2,\pm}\bigr)\oplus\bigl(V_{\frac{\alpha}{2}+{L_+}}\otimes V_{\mathbb{Z}\beta}^{T_2,\mp}\bigr) \,,
\end{align*}
where $\theta = \frac{1}{2}\sum_{i=0}^{\lfloor\frac{n-3}{2}\rfloor}\alpha_{2i+1}$ and $U_\beta=(-1)^j/16$ on $T_j$ for $j=1,2$.

On the other hand, note that
\[
Q^*/Q= \Bigl\{Q, \frac{\alpha}{2}+Q, \lambda_{n-1}+Q, \lambda_n+Q\Bigr\} \,,
\]
where
\[
\lambda_{n-1}=\frac{1}{2}\Bigl(\alpha_1+2\alpha_2+\cdots+(n-2)\alpha_{n-2}+\frac{1}{2}n\alpha_{n-1}+\frac{1}{2}(n-2)\alpha_n\Bigr)
\]
and $\la_n=\si(\la_{n-1})$.
Using Dong's Theorem \cite{D1}, we obtain $5$ distinct irreducible $V_Q^\si$-modules of untwisted type:
\[
V_Q^\pm\,,\qquad V_{\frac{\alpha}{2}+Q}^\pm\,,\qquad V_{\lambda_{n-1}+Q}\,.
\]
We have the following correspondence:
\begin{align*}
V_Q^\pm &\cong \bigl(V_{L_+}\otimes V_{\mathbb{Z}\beta}^\pm\bigr)\oplus\bigl(V_{\frac{\alpha}{2}+L_+}\otimes V_{\frac{\beta}{2}+\mathbb{Z}\beta}^\pm\bigr) \,,\\
V_{\frac{\alpha}{2}+Q}^\pm &\cong \bigl(V_{L_+}\otimes V_{\frac{\beta}{2}+\mathbb{Z}\beta}^\pm\bigr)\oplus\bigl(V_{\frac{\alpha}{2}+L_+}\otimes V_{\mathbb{Z}\beta}^\pm\bigr) \,,\\
V_{\lambda_{n-1}+Q} &\cong \bigl(V_{\frac{n-1}{4}\alpha+\theta+L_+}\otimes V_{\frac{\beta}{4}+\mathbb{Z}\beta}\bigr)\oplus\bigl(V_{\frac{n+1}{4}\alpha+\theta+L_+}\otimes V_{\frac{\beta}{4}+\mathbb{Z}\beta}\bigr) \,.
\end{align*}

We now construct the irreducible $\si$-twisted $V_Q$-modules using Section \ref{twlat}.
There are two irreducible $G_\si$-modules, $P_1$ and $P_2$, and both can be identified with $\mathbb{C}[p_1^{\pm1},\ldots, p_{n-1}^{\pm1}]$ as vector spaces. The action of $G_\si$
on $P_j$ is determined by:
\begin{align*}
U_{\alpha_i}&=p_i(-1)^{p_{i+1}\frac{\partial}{\partial p_{i+1}}} \,, \qquad i=1,\dots,n-2 \,,\\
U_{\alpha_{n-1}}&=p_{n-1} \,,\qquad U_{\alpha_n}=(-1)^{j+1} p_{n-1} \,,\\ 
U_\alpha&=(-1)^{j+1} 4p_{n-1}^2 \,, \qquad  U_\beta=\frac{(-1)^j}{16} \,.
\end{align*}
The automorphism $\si$ acts as the identity operator on $P_1$, and as $(-1)^{q\frac\d{\d q}}$ on $P_2$. 
We obtain $4$ distinct irreducible $V_Q^\si$-modules of twisted type:
\begin{align*}
\F_\si^\pm\otimes P_1 \,,\qquad
\bigl(\F_\si\otimes P_2\bigr)^\pm =
\bigl(\F_\si^\pm\otimes P_2^+\bigr)\oplus\bigl(\F_\si^\mp\otimes P_2^-\bigr) \,,
\end{align*}
and we have the following correspondence:
\begin{align*}
\bigl(\F_\si\otimes P_1\bigr)^\pm &\cong \bigl(V_{L_+}\otimes V_{\mathbb{Z}\beta}^{T_1,\pm}\bigr)\oplus\bigl(V_{\frac{\alpha}{2}+{L_+}}\otimes V_{\mathbb{Z}\beta}^{T_1,\pm}\bigr) \,, \\
\bigl(\F_\si\otimes P_2\bigr)^\pm &\cong \bigl(V_{L_+}\otimes V_{\mathbb{Z}\beta}^{T_2,\pm}\bigr)\oplus\bigl(V_{\frac{\alpha}{2}+{L_+}}\otimes V_{\mathbb{Z}\beta}^{T_2,\mp}\bigr) \,.
\end{align*}

\subsection{$E_6$ root lattice with a Dynkin diagram automorphism}\label{E6}

Consider the $E_6$ Dynkin diagram with the simple roots $\{\alpha_1,\ldots, \alpha_6\}$ labeled as follows:

\medskip

\centerline{\xymatrix{ 
& & \al_6 \ar@{-}[d] \\ 
\al_1 \ar@{-}[r] & \al_2 \ar@{-}[r] & \al_3 \ar@{-}[r] & \al_4 \ar@{-}[r] & \al_5 \\
}} 

\bigskip

Let $Q$ be the root lattice, 
and $\si$ be the Dynkin diagram automorphism $\alpha_1 \leftrightarrow \alpha_5$, $\alpha_2 \leftrightarrow \alpha_4$, with fixed points $\alpha_3$ and $\alpha_6$. 
Set 
\[
\alpha^1=\alpha_1+\alpha_5 \,,\quad
\beta^1=\alpha_1-\alpha_5 \,,\quad
\alpha^2=\alpha_2+\alpha_4 \,,\quad
\beta^2=\alpha_2-\alpha_4 \,.
\]
Then we have: $Q=\bar{Q},$
\[
L_+=\ZZ\al^1+\ZZ\al^2+\mathbb{Z}\alpha_3+\mathbb{Z}\alpha_6 \,,\qquad
L_-=\mathbb{Z}\beta^1+\ZZ\be^2 \,,
\]
and
\[
Q/L=\{L,\al_1+L,\al_2+L, \al_1+\al_2+L\} \,.
\]
Hence, by Proposition \ref{orb}, 
\begin{align*}
V_Q^\si\cong&\displaystyle\bigl(V_{L_+}\otimes V_{L_-}^+\bigr)
\oplus\bigl(V_{\frac{\al^1}{2}+L_+}\otimes V_{\frac{\be^1}{2}+L_-}^+\bigr)\nonumber\\
&\oplus\bigl(V_{\frac{\al^2}{2}+L_+}\otimes V_{\frac{\be^2}{2}+L_-}^+\bigr)
\oplus\bigl(V_{\frac{\al^1+\al^2}{2}+L_+}\otimes V_{\frac{\be^1+\be^2}{2}+L_-}^+\bigr) \,.
\end{align*}

There are $3$ distinct irreducible $V_Q^\si$-modules of untwisted type:
\begin{align*}
\bigl(V_{L_+} &\otimes V_{L_-}^\pm\bigr) \oplus\bigl(V_{\frac{\al^1}{2}+L_+}\otimes V_{\frac{\be^1}{2}+L_-}^\pm\bigr)\\
&\oplus\bigl(V_{\frac{\al^2}{2}+L_+}\otimes V_{\frac{\be^2}{2}+L_-}^\pm\bigr)\oplus\bigl(V_{\frac{\al^1+\al^2}{2}+L_+}\otimes V_{\frac{\be^1+\be^2}{2}+L_-}^\pm\bigr)\,,\\
\bigl(V_{L_+} &\otimes V_{\mu_2+L_-}\bigr)\oplus\bigl(V_{\frac{\al^1}{2}+L_+}\otimes V_{\mu_2+\frac{\be^1}{2}+L_-}\bigr)
\oplus\bigl(V_{\frac{\al^2}{2}+L_+}\otimes V_{\mu_2+\frac{\be^2}{2}+L_-}\bigr)\\
&\oplus\bigl(V_{\frac{\al^1+\al^2}{2}+L_+}\otimes V_{\mu_2+\frac{\be^1+\be^2}{2}+L_-}\bigr)\,,
\end{align*}
where $\mu_2=\frac{1}{3}(\be^1+2\be^2)$.
To describe the ones of twisted type, notice that the group $G_\si$ associated to $L_-$ is abelian and its characters $\chi$ are determined by $\chi(U_{\be^i})$ where $i=1, 2$.
The latter are given by \eqref{chiual} with $s(\be^i)=s_i\in\{\pm1\}$. Thus we obtain 
$8$ distinct irreducible $V_Q^\si$-modules of twisted type:
\begin{align*}
\bigl(V_{L_+}\otimes V_{L_-}^{T_\chi,\pm}\bigr)
&\oplus\bigl(V_{\frac{\al^1}{2}+L_+}\otimes V_{L_-}^{T_\chi,\pm s_1}\bigr)\\
&\oplus\bigl(V_{\frac{\al^2}{2}+L_+}\otimes V_{L_-}^{T_\chi,\pm s_2}\bigr)
\oplus\bigl(V_{\frac{\al^1+\al^2}{2}+L_+}\otimes V_{L_-}^{T_\chi,\pm s_1s_2}\bigr) \,.
\end{align*}

We now describe the irreducible $\si$-twisted $V_Q$-modules using Section \ref{twlat}. The irreducible modules over the
group $G_\si$ associated to $Q$ can be identified with the vector space
$P=\mathbb{C}[q_1^{\pm1},q_2^{\pm1},q_3^{\pm1}, q_4^{\pm1}]$, so that:
\begin{align*}
U_{\alpha_1}&=q_1(-1)^{\frac{\partial}{\partial q_2}} \,,\qquad
&U_{\alpha_2}&=q_2(-1)^{\frac{\partial}{\partial q_3}} \,,\\
U_{\alpha_3}&=q_3(-1)^{\frac{\partial}{\partial q_4}} \,,\qquad
&U_{\alpha_6}&=q_4 \,.
\end{align*}
In fact, there are $4$ such modules depending on the signs of $U_{\beta^i}$; for each $s_1,s_2\in\{\pm1\}$, we have:
\begin{align*}
U_{\alpha_4}&=-s_2 q_2(-1)^{\frac{\partial}{\partial q_3}} \,,\qquad
&U_{\alpha_5}&=-s_1 q_1(-1)^{\frac{\partial}{\partial q_2}} \,,\\
U_{\alpha^i}&=-s_i 4q_i^2 \,,\qquad
&U_{\beta^i}&=s_i\frac{1}{16} \,.
\end{align*}
The corresponding $G_\si$-module will be denoted $P_{\chi}$ where $\chi=(s_1,s_2)$.
The automorphism $\si$ acts on $P_{\chi}$ by:
\[
\si(q_i) = -s_i q_i \,, \qquad i=1,2 \,.
\]
The irreducible $\si$-twisted $V_Q$-modules have the form $\F_\si\otimes P_\chi$, and they give rise to $8$
irreducible $V_Q^\si$-modules of twisted type:
\[ 
\bigl(\F_\si\otimes P_\chi\bigr)^\pm \,, \qquad \chi=(s_1,s_2) \,, \;\; s_1,s_2\in\{\pm1\} \,.
\]
They correspond to the above modules of twisted type with the same characters $\chi$.

\section*{Acknowledgements}
The first author is grateful to Victor Kac and Ivan Todorov for valuable discussions and collaboration on
orbifolds of lattice vertex algebras.
\\

\textit{Note added in proof.} After our paper was submitted, we
learned of the preprint ``Cyclic permutations of lattice vertex
operator algebras" by C.~Dong, F.~Xu, and N.~Yu (arXiv:1501.00063),
where $2$-cycle permutation orbifolds of lattice vertex algebras are investigated. The
classification of irreducible modules of such orbifolds can be derived
as a special case of our results. We thank Nina Yu for bringing their
preprint to our attention.

\bibliographystyle{amsalpha}

\end{document}